\newtheorem{theorem}{Theorem}
\newtheorem{corollary}[theorem]{Corollary}
\newtheorem{lemma}[theorem]{Lemma}
\newcommand{\NN}{{\mathbb N}}
\newcommand{\RR}{{\mathbb R}}
\newcommand{\ZZ}{\ensuremath{\mathbb{Z}}}
\newcommand{\PP}{{\cal P}}
\newcommand{\dd}{\;\mbox{d}}
\newcommand{\cart}{\square}
\DeclareMathOperator{\sumh}{hom}
\DeclareMathOperator{\dist}{dist}
\definecolor{ForestGreen}{rgb}{0,0.45,0}
\definecolor{DistinguishableBlue}{rgb}{0.1,0.1,0.7451}
\begin{document}
\title{The step Sidorenko property and non-norming edge-transitive graphs\thanks{The work of the first three authors has received funding from the European Research Council (ERC) under the European Union's Horizon 2020 research and innovation programme (grant agreement No 648509). This publication reflects only its authors' view; the European Research Council Executive Agency is not responsible for any use that may be made of the information it contains.}}

\author{Daniel Kr\'al'\thanks{Faculty of Informatics, Masaryk University, Botanick\'a 68A, 602 00 Brno, Czech Republic, and Mathematics Institute, DIMAP and Department of Computer Science, University of Warwick, Coventry CV4 7AL, UK. E-mail: {\tt dkral@fi.muni.cz}. The first author was also supported by the Engineering and Physical Sciences Research Council Standard Grant number EP/M025365/1.}\and
        Ta\'isa L.~Martins\thanks{Mathematics Institute and DIMAP, University of Warwick, Coventry CV4 7AL, UK. E-mail: {\tt t.lopes-martins@warwick.ac.uk}. This author was also supported by the CNPq Science Without Borders grant number 200932/2014-4.}\and
	P\'eter P\'al Pach\thanks{Department of Computer Science and DIMAP, University of Warwick, Coventry CV4 7AL, UK and Department of Computer Science and Information Theory, Budapest
  University of Technology and Economics, 1117 Budapest, Magyar tud\'osok
  k\"or\'utja 2., Hungary. E-mail: {\tt ppp@cs.bme.hu}. This author was also  supported by the National Research, Development and Innovation Office NKFIH
  (Grant Nr.~K124171).}\and
	Marcin Wrochna\thanks{Institute of Informatics, University of Warsaw, Poland. E-mail: {\tt m.wrochna@mimuw.edu.pl}.
	This author was supported by the National Science Centre of Poland grant number 2016/21/N/ST6/00475 and by the Foundation for Polish Science (FNP) via the START stipend programme. 
	The visit of this author to the University Warwick was supported by the Leverhulme Trust 2014 Philip Leverhulme Prize of the first author.}}

\date{} 
\maketitle

\begin{abstract}
Sidorenko's Conjecture asserts that every bipartite graph $H$ has the Sidorenko property,
i.e., a quasirandom graph minimizes the density of $H$ among all graphs with the same edge density.
We study a stronger property, 
which requires that
a quasirandom multipartite graph minimizes the density of $H$ among all graphs with the same edge densities between its parts;
this property is called the step Sidorenko property.
We show that many bipartite graphs fail to have the step Sidorenko property and
use our results to show the existence of a bipartite edge-transitive graph that is not weakly norming;
this answers a question of Hatami~[Israel J. Math. 175 (2010), 125--150].
\end{abstract}

\section{Introduction}

Sidorenko's Conjecture is one of the most important open problems in extremal graph theory.
A graph $H$ has the {\em Sidorenko property} 
if a quasirandom graph minimizes the density of $H$ among all graphs with the same edge density.
The beautiful conjecture of Erd\H os and Simonovits~\cite{Sim84} and of Sidorenko~\cite{Sid93}
asserts that every bipartite graph has the Sidorenko property (it is easy to see that non-bipartite
graphs fail to have the property).
In this paper, we consider a more general property, the step Sidorenko property, and
explore the link between this property and weakly norming graphs 
to show the existence of a bipartite edge-transitive graph that is not weakly norming.
This answers a question of Hatami~\cite{Hat10} whether such graphs exist.

Sidorenko's Conjecture is one of the central problems in extremal combinatorics.
Sidorenko~\cite{Sid93} confirmed the conjecture for trees, cycles and
bipartite graphs with one of the sides having at most three vertices;
the case of paths is equivalent to the Blakley-Roy inequality for matrices, which was proven in~\cite{BlaR65}.
Additional graphs, such as cubes and bipartite graphs with a vertex complete to the other part,
were added to the list of graphs with the Sidorenko property
by Conlon, Fox and Sudakov~\cite{ConFS10}, by Hatami~\cite{Hat10}, and by Szegedy~\cite{Sze17}.
Recursively described classes of bipartite graphs that have the Sidorenko property
were obtained by Conlon, Kim, Lee and Lee~\cite{ConKLLXX},
by Kim, Lee and Lee~\cite{KimLL16}, by Li and Szegedy~\cite{LiSXX} and by Szegedy~\cite{Sze14}.
In particular, Szegedy~\cite{Sze14} has described a class of graphs called thick graphs,
which are amenable to showing the Sidorenko property using the entropy method argument that he developed.
More recently, Conlon and Lee~\cite{ConL18} showed that
bipartite graphs such that one of the parts has many vertices of maximum degree have the Sidorenko property.
Sidorenko's Conjecture is also known to hold in the local sense~\cite[Proposition 16.27]{Lov12},
i.e., a small modification of a quasirandom graph preserving its edge density
does not decrease the number of copies of any bipartite graph.
A stronger statement of this type, which comes with uniform quantitative bounds,
has recently been proven by Fox and Wei~\cite{FoxW17}.

Sidorenko's Conjecture is also related to other well-studied problems in graph theory.
We would like to particularly mention the connection to quasirandom graphs.
We say that a graph $H$ is {\em forcing}
if all minimizers of the density of $H$ among graphs with the same edge density are quasirandom graphs.
Note that if $H$ is forcing, then $H$ has the Sidorenko property.
The classical result of Thomason~\cite{Tho87}, also see~\cite{ChuGW89},
says that the cycle of length four is forcing.
This result was generalized by Chung, Graham and Wilson~\cite{ChuGW89},
who showed that every complete bipartite graph $K_{2,n}$ is forcing, and
by Skokan and Thoma~\cite{SkoT04}, who showed that all complete bipartite graphs are forcing.
A characterization of forcing graphs was stated as a question by Skokan and Thoma~\cite{SkoT04} and
conjectured by Conlon, Fox and Sudakov~\cite{ConFS10}:
a graph $H$ is forcing if and only if $H$ is bipartite and contains a cycle.

Another graph theoretic notion related to Sidorenko's Conjecture is that of common graphs.
A graph $H$ is {\em common} if a quasirandom graph minimizes the sum of densities of $H$ and the complement of $H$.
An old theorem of Goodman~\cite{Goo59} says that the complete graph $K_3$ is common.
The conjecture of Erd\H os that the complete graph $K_4$ is also common
was disproved by an ingenious construction of Thomason~\cite{Tho89};
counterexamples with a simpler structure were found by Franek and R\"odl in~\cite{FraR93}.
Jagger, \v S\v tov\'\i{}\v cek and Thomason~\cite{JagST96} showed that no graph containing $K_4$ is common.
On the other hand, it is known that
the graph obtained from $K_4$ by removing an edge~\cite{FraR92} is common and
so is the wheel $W_5$~\cite{HatHKNR12}.
The classification of common graphs remains a wide open problem.

Our results are motivated by the relation of Sidorenko's Conjecture to \emph{weakly norming} graphs,
which are of substantial interest in the theory of graph limits.
Due to its technical nature, we defer the definition to Section~\ref{sec-prelim}.
Intuitively, these are graphs $H$ such that the density of $H$ in other graphs defines a norm on the space of graphons (graph limits). Chapter 14.1 in Lov\'asz' book~\cite{Lov12} gives an introduction to this notion.
Every weakly norming graph has the Sidorenko property~\cite{Hat10}.
However, every weakly norming graph also has a stronger property~\cite[Proposition 14.13]{Lov12},
which we call the {\em step Sidorenko property}.
Informally speaking, a graph $H$ has the step Sidorenko property
if a multipartite quasirandom graph minimizes the density of $H$ among all multipartite graphs with the same density
inside and between its parts; we give a formal definition in Section~\ref{sec-prelim}.
It is not hard to find a graph that has the Sidorenko property but not the step Sidorenko property;
the cycle of length four with an added pendant edge is an example (see Section~\ref{sec-prelim}).

In this paper,
we present techniques for showing that a bipartite graph fails to have the step Sidorenko property.
Our techniques allow us to show that graphs as simple and symmetric as toroidal grids, i.e., Cartesian products of any number of cycles, do not have the step Sidorenko property.
The only exceptions are hypercubes (and single cycles of even length), which were shown to be weakly norming by Hatami~\cite{Hat10} (see also~\cite[Proposition 14.2]{Lov12} for a concise presentation).
The fact that most of the toroidal grids are not weakly norming is surprising
when contrasted with the result of Conlon and Lee~\cite{ConL17} that
the incidence graphs of regular polytopes are weakly norming.
Since toroidal grids $C_n\cart C_n$ are edge-transitive,
this answers in the negative a question of Hatami~\cite{Hat10}
whether all edge-transitive bipartite graphs are weakly norming.

\section{Preliminaries}
\label{sec-prelim}

In this section, we introduce the notation that is used throughout the paper.
In general, we follow standard graph theory notation.
All graphs considered in this paper are simple and without loops.
We sometimes consider graphs with vertices and edges assigned non-negative weights;
when this is the case, we refer to such a graph as a {\em weighted graph}.
The order of a graph $G$, i.e., its number of vertices, will be denoted by $|G|$ and
the size of a graph $G$, i.e., its number of edges, by~$\|G\|$.
If $v$ and $w$ are two vertices of $G$,
then $\dist(v,w)$ is the distance between $v$ and $w$,
i.e., the number of edges of the shortest path from $v$ to $w$.
The \emph{Cartesian product} of graphs $G_1,\ldots,G_k$, denoted $G_1\cart\cdots\cart G_k$,
is the graph with vertex set equal to the Cartesian product of the vertex sets of $G_1,\ldots,G_k$,
where two vertices $(u_1,\ldots,u_k)$ and $(v_1,\ldots,v_k)$ are adjacent
if there exists $1\le i_0\le k$ such that $u_{i_0}v_{i_0}$ is an edge of $G_{i_0}$ and $u_i=v_i$ for all $i\not=i_0$.

In the rest of this section,
we introduce notation related to graph homomorphisms and
present notions from the theory of graph limits that we need for our exposition.
We also formally define the Sidorenko property, the step Sidorenko property and weakly norming graphs.

\subsection{Graph homomorphisms}

A {\em homomorphism} from a graph $H$ to a graph $G$ is a mapping $f$ from $V(H)$ to $V(G)$ such that
if $vv'$ is an edge of $H$, then $f(v)f(v')$ is an edge of $G$.
If $f$ is a homomorphism from $H$ to $G$,
$|f^{-1}(X)|$ for $X\subseteq V(G)$ denotes the number of vertices of $H$ mapped to a vertex in $X$ and
$|f^{-1}(X)|$ for $X\subseteq E(G)$ denotes the number of edges mapped to an edge in $X$;
for simplicity, we write $|f^{-1}(x)|$ instead of $|f^{-1}(\{x\})|$.

We will need to consider homomorphisms extending a partial mapping between vertices of $H$ and $G$ and
we now introduce notation that will be handful in this setting.
We write $H(v_1,\ldots,v_k)$ for a graph $H$ with $k$ distinguished vertices $v_1,\ldots,v_k$.
If $H(v_1,\ldots,v_k)$ and $G(v'_1,\ldots,v'_k)$ are two graphs with $k$ distinguished vertices,
then a homomorphism from $H(v_1,\ldots,v_k)$ to $G(v'_1,\ldots,v'_k)$ is
a homomorphism from $H$ to $G$ that maps $v_i$ to $v'_i$ for $i=1,\ldots,k$.

We will also consider homomorphisms to graphs with vertex and edge weights.
As given earlier,
a {\em weighted graph} is a graph $G$ where each vertex and each edge of $G$ is assigned a non-negative weight;
the mapping $w$ from $V(G)\cup E(G)$ assigning the weights will be referred to as a {\em weight function} of $G$.
The {\em weight of a homomorphism} $f$ from $H$ to a weighted graph $G$, denoted $w(f)$, is defined as
\[\prod_{v\in V(H)}w(f(v))\prod_{vv'\in E(H)}w(f(v)f(v'))=\prod_{v\in V(G)}w(v)^{|f^{-1}(v)|}\prod_{e\in E(G)}w(e)^{|f^{-1}(e)|}\;\mbox{.}\]
We will often speak about the sum of the weights of homomorphisms
from a graph $H(v_1,\ldots,v_k)$ to a weighted graph $G(v'_1,\ldots,v'_k)$;
this sum will be denoted by $\sumh(H(v_1,\ldots,v_k),G(v'_1,\ldots,v'_k))$ and
we will understand it to be zero if no such homomorphism exists.

We also use the just introduced notation for graphs with distinguished vertices when talking about blow-ups of graphs.
A {\em $k$-blow-up} of a graph $H(v)$ is the graph obtained from $H$ by replacing the vertex $v$ with $k$ new vertices,
which we refer to as {\em clones} of $v$.
The vertices different from $v$ preserve their adjacencies,
the clones of $v$ form an independent set and
each of them is adjacent precisely to the neighbors of $v$.
Observe that
if $H$ is a weighted graph, then if the edges of the $k$-blow-up of $H(v)$ have the same weight as in $H$,
the vertices of the $k$-blow-up except for the clones have the same weights as in $H$ and
each clone has weight equal to $1/k$ of the weight of $v$,
then the sum of the weights of homomorphisms from $G$ to $H$ and
the sum of the weights of homomorphisms from $G$ to the $k$-blow-up are the same for every graph $G$.

\subsection{Graph limits}

The theory of graph limits offers analytic tools to study large graphs.
We present here only those notions that we need further, and
refer the reader to the monograph of Lov\'asz~\cite{Lov12} on the subject
for a comprehensive introduction to the theory.

Let $t(H,G)$ be the normalized number of homomorphisms from a graph $H$ to a graph $G$,
i.e., $t(H,G)=\sumh(H,G)/|V(G)|^{|V(H)|}$
where $G$ in $\sumh(H,G)$ is understood to have all the vertex and edge weights equal to one.
A sequence $(G_n)_{n\in\NN}$ of graphs is {\em convergent}
if the sequence $t(H,G_n)$ converges for every graph $H$.
A convergent sequence of graphs can be represented by an analytic object called a graphon.
A {\em graphon} is a (Borel) measurable symmetric function $W$ from $[0,1]^2$ to $[0,1]$,
i.e., $W(x,y)=W(y,x)$ for all $(x,y)\in [0,1]^2$.
One can think (although very imprecisely) of a graphon as a continuous version of the adjacency matrix of a graph.
Led by this intuition, we can define the density of a graph $H$ in a graphon $W$ as
\[ t(H,W)=\int_{[0,1]^{V(H)}}\prod_{vv'\in E(H)}W(x_{v},x_{v'}) \dd x^{V(H)}\;\mbox{.}\]
Note that the definition of $t(H,W)$ does not require $W$ to be non-negative and
we can define $t(H,f)$ in the same way for any bounded measurable function $f:[0,1]^2\to \RR$.

We say that a graphon $W$ is a {\em limit} of a convergent sequence $(G_n)_{n\in\NN}$ of graphs
if $t(H,W)$ is the limit density of $t(H,G_n)$ for every graph $H$.
It is not hard to show that for every graphon $W$, there exists a convergent sequence of graphs such that $W$ is its limit.
The converse statement is also true as shown by Lov\'asz and Szegedy~\cite{LovS06},
i.e., for every convergent sequence of graphs, there exists a graphon that is its limit.

The density $t(K_2,W)$ of $K_2$ is equal to the $L_1$-norm of a graphon $W$ as a function from $[0,1]^2$.
This leads to the question which graphs $H$ can be used to define a norm on the space of measurable functions on $[0,1]^2$ or,
more restrictively, on the space of graphons.
That is, we say that a graph $H$ is {\em weakly norming}
if the function $\|W\|_{H}=t(H,W)^{1/\|H\|}$ is a norm on the space of graphons,
i.e., $\|W\|_{H}=0$ if and only if $W$ is equal to zero almost-everywhere and the triangle inequality
$\|W_1+W_2\|_{H}\le \|W_1\|_{H}+\|W_2\|_{H}$ holds for any two graphons $W_1$ and $W_2$.
Observe that $H$ is weakly norming if and only if
$\|\ |f|\ \|_{H}$ is a norm on the set of all bounded symmetric functions $f$ from $[0,1]^2$ to $\RR$ (if we required that $\|f\|_{H}$, without the absolute value, is a norm on all such functions, we would get the slightly stronger notion of \emph{norming graphs}).

It is not hard to show that every weakly norming graph must be bipartite.
Hatami~\cite{Hat10} showed stronger statements as corollaries
of his characterization of weakly norming graphs as those satisfying a certain H\"older type inequality.
First, every weakly norming graph $H$ must be biregular,
i.e., all vertices in the same part of its bipartition have the same degree.
Second,
every subgraph $H'$ of a connected weakly norming graph $H$ must satisfy that
\[\frac{\|H'\|}{|H'|-1}\le\frac{\|H\|}{|H|-1}\;\mbox{.}\]
Weakly norming graphs include complete bipartite graphs (in particular, stars), even cycles and hypercubes~\cite{Hat10};
later, Conlon and Lee~\cite{ConL17} presented a large class of weakly norming graphs, which they refer to as reflection graphs.

Every weighted graph $G$ with a weight function $w$ that assigns edges weights between $0$ and $1$ can be associated with a graphon $W_G$ as follows.
Each vertex $v$ of $G$ is associated with a measurable set $J_v$ with measure $w(v)/w(V(G))$ in such a way that
the sets $J_v$, $v\in V(G)$, form a partition of the interval $[0,1]$;
$w(V(G))$ denotes the sum of the weights of the vertices of $G$.
For $x\in J_v$ and $y\in J_{v'}$,
we set $W(x,y)=w(vv')$ if $vv'\in E(G)$ and $W(x,y)=0$ otherwise (in particular, we set $W(x,y)=0$ if $v=v'$).
It is not hard to observe that $\sumh(H,G)$ is equal to $t(H,W_G)\cdot w(V(G))^{|H|}$;
in particular, if the sum of the weights of vertices of $G$ is one, then $\sumh(H,G)=t(H,W_G)$.
This correspondence will allow us to study weakly norming graphs in terms of weighted homomorphisms.

\subsection{Step Sidorenko property}

We now use the language of graph limits to describe the Sidorenko property and
to formally define the step Sidorenko property.
A graph $H$ has the {\em Sidorenko property} if
\begin{equation}
t(K_2,W)^{\|H\|}\le t(H,W) \label{eq-Sidprop}
\end{equation}
for every graphon $W$.
The left hand side can also be written as $t(H, U_p)$, where $U_p \equiv p$ is the constant graphon with the same edge density $p = t(K_2,W)$ as~$W$.
A graph $H$ is {\em forcing} if it has the Sidorenko property and
 \eqref{eq-Sidprop} holds with equality only if $W$ is equal to some $p\in [0,1]$ almost everywhere.
As we have presented earlier,
Sidorenko's Conjecture asserts that every bipartite graph has the Sidorenko property and
the Forcing Conjecture asserts that every bipartite graph with a cycle is forcing.

Let $\PP$ be a partition of the interval $[0,1]$ into finitely many non-null measurable sets.
We now define the {\em stepping operator\/}.
If $W$ is a graphon, then the graphon $W^{\PP}$ is defined for $(x,y)\in [0,1]^2$ as the `step-wise average':
\[W^{\PP}(x,y)=\frac{1}{|J\|J'|}\int_{J\times J'} W(s,t)\dd s\dd t\]
where $J$ and $J'$ are the unique parts from $\PP$ such that $x\in J$ and $y\in J'$, and
$|X|$ denotes the measure of a measurable subset $X\subseteq [0,1]$.
Note that the graphon $W^{\PP}$ is constant on $J\times J'$ for any $J,J'\in\PP$,
i.e., the graphon $W^{\PP}$ is a step graphon.

Let $\PP_0$ be the partition with a single part being the interval $[0,1]$ itself.
A graph $H$ has the Sidorenko property if and only if $t(H,W^{\PP_0})\le t(H,W)$ for every graphon $W$.
This motivates the following definition.
A graph $H$ has the {\em step Sidorenko property} if and only if
\[t(H,W^{\PP})\le t(H,W)\]
for every graphon $W$ and every partition $\PP$ of $[0,1]$ into finitely many non-null measurable sets.
Since all weakly norming graphs~\cite[Proposition 14.13]{Lov12} have the step Sidorenko property,
it follows that complete bipartite graphs, even cycles, hypercubes and more generally reflection graphs
defined by Conlon and Lee~\cite{ConL17} all have the step Sidorenko property.

The definition of the step Sidorenko property yields that
every graph that has the step Sidorenko property also has the Sidorenko property.
However, the converse is not true in general as we now demonstrate.
Let $C_4^+$ be the $5$-vertex graph obtained from a cycle of length four 
by adding a single vertex adjacent to one of the vertices of the cycle.
The graph $C_4^+$ has the Sidorenko property because, e.g.,
it is a bipartite graph with a vertex complete to the other part~\cite{ConFS10}.
On the other hand, $C_4^+$ does not have the step Sidorenko property.
Consider the partition $\PP=\{[0,\frac{2}{5}),[\frac{2}{5},1]\}$ and
the graphon $W$ that is defined as follows (the symmetric cases of $(x,y)$ are omitted).
\[ W(x,y) =\begin{cases}
 0.9 & \mbox{if $(x, y) \in [0,\frac{1}{5})\times [0,\frac{1}{5})$,}\\
 0.85& \mbox{if $(x, y) \in [0,\frac{1}{5})\times [\frac{1}{5},\frac{2}{5})$,}\\
 0.2& \mbox{if $(x, y) \in [0,\frac{1}{5})\times [\frac{2}{5},1]$,}\\
 1 & \mbox{if $(x, y) \in [\frac{1}{5},\frac{2}{5})\times [\frac{1}{5},\frac{2}{5})$, and}\\
 0 & \mbox{otherwise.}
\end{cases} \]
A straightforward computation\footnote{We thank Adam Finchett and Jonathan Noel for indicating an error in previous calculations.\looseness=-1} yields that
\begin{align*}
t(C_4^+,W)&\simeq 0.007453\mbox{ and}\\
t(C_4^+,W^{\PP})&\simeq 0.007461 > t(C_4^+,W)\;\mbox{.}
\end{align*}
Hence, the graph $C_4^+$ does not have the step Sidorenko property.

\section{Grids}

In this section, we demonstrate our techniques from Section~\ref{sec-general} in a less general setting.
We believe that this makes our presentation more accessible.

Intuitively, we consider a graph $G$ with distinguished vertices $u_0, u_1, u_2$ such that $u_0 u_1$ and $u_0 u_2$ are edges.
The idea is to blow-up $u_0$ into two copies and slightly perturb weights
only on edges corresponding to $u_0 u_1$ and $u_0 u_2$,
increasing weights of edges for one copy and
decreasing it for the other proportionally to a parameter $\alpha$,
resulting in a weighted graph $G_\alpha$.
A partition $\PP$ on the corresponding graphon $W_\alpha$ is then defined
so that the stepping operator averages out this perturbation,
returning to the original graph: $W_{\alpha}^{\PP}=W_G$.
The difference in homomorphism densities $t(H,W_\alpha^\PP)-t(H,W_\alpha)$
is then analyzed in the limit of small perturbations $\alpha$:
first order changes (those linear in $\alpha$) cancel out.
Second order changes result in a condition that can be expressed fairly concisely
as positive semidefiniteness of a matrix whose entries count certain constrained homomorphisms.

The more powerful setting in Section~\ref{sec-general} uses essentially the same idea, only blowing up more vertices,
resulting in a larger matrix and allowing us to further constraint the homomorphisms we have to count.
We turn to choosing the starting weighted graph $G$ and interpreting these counts in later corollaries.

\begin{theorem}
\label{thm-weights}
Let $H$ be a graph and
let $G$ be a weighted graph with three distinguished vertices $u_0$, $u_1$ and $u_2$ such that $u_0u_1$ and $u_0u_2$ are edges.
For $i,j\in\{1,2\}$, 
let $M_{ij}$ be the sum of the weights of homomorphisms from $H(v_0,v_1,v_2)$ to $G(u_0,u_i,u_j)$
summed over all choices of vertices $v_0$, $v_1$ and $v_2$ in $H$ such that $v_0v_1$ and $v_0v_2$ are edges, i.e.,
\[M_{ij}=\sum_{v_0v_1,v_0v_2\in E(H)}\sumh(H(v_0,v_1,v_2),G(u_0,u_i,u_j))\;\mbox{.}\]
If the $(2\times 2)$-matrix $M$ is not positive semidefinite, i.e., $M_{11} M_{22} < {M_{12}}^2$, then $H$ does not have the step Sidorenko property.
\end{theorem}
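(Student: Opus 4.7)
The plan is to perturb $G$ via a blow-up at $u_0$ with two parameters $\alpha_1,\alpha_2$, producing a graphon $W_{\vec\alpha}$ whose step-average $W_{\vec\alpha}^{\PP}$ is independent of $\vec\alpha$, and then to exhibit a direction of negativity of the Hessian of $t(H,W_{\vec\alpha})$ at $\vec\alpha=0$ whenever the matrix $M$ fails to be positive semidefinite. A negative direction gives $t(H,W_{\vec\alpha})<t(H,W_G)=t(H,W_{\vec\alpha}^{\PP})$ for small $\vec\alpha$, which contradicts step Sidorenko.

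First I would build the construction. After normalizing $G$ so that its vertex weights sum to one and its edge weights lie in $(0,1)$, take the $2$-blow-up at $u_0$ with clones $u_0^+,u_0^-$ each of vertex weight $w(u_0)/2$; by the blow-up invariance noted in Section~\ref{sec-prelim}, this leaves every homomorphism count unchanged. For small $\vec\alpha=(\alpha_1,\alpha_2)\in\RR^2$, modify the four edge weights to $w(u_0u_i)\pm\alpha_i$ on $u_0^\pm u_i$, call the result $G_{\vec\alpha}$, and set $W_{\vec\alpha}:=W_{G_{\vec\alpha}}$. Let $\PP$ merge $J_{u_0^+}\cup J_{u_0^-}$ into one part and keep every other $J_v$ separate. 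Since the two clones occupy intervals of equal length, step-averaging cancels the $\pm\alpha_i$ exactly, so $W_{\vec\alpha}^{\PP}=W_G$ and hence $t(H,W_{\vec\alpha}^{\PP})=t(H,W_G)$ does not depend on $\vec\alpha$.

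Next I would Taylor-expand $t(H,W_{\vec\alpha})=\sumh(H,G_{\vec\alpha})/w(V(G))^{|H|}$ around $\vec\alpha=0$. A homomorphism into $G_{\vec\alpha}$ factors as a downstairs homomorphism $g:V(H)\to V(G)$ together with a sign $\sigma_v\in\{\pm 1\}$ on each $v\in g^{-1}(u_0)$, and the perturbed edges contribute factors $w(u_0u_i)+\sigma_v\alpha_i$. Summing over $\sigma$ annihilates all odd powers of each $\alpha_i$, so every first-order term vanishes; at second order only pairs of perturbed edges incident to the \emph{same} clone $v_0\in g^{-1}(u_0)$ survive (cross-vertex terms die because $\sum_{\sigma_v}\sigma_v=0$). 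A routine bookkeeping that matches the $2^{|g^{-1}(u_0)|}$ from the sign sum against the $2^{-|g^{-1}(u_0)|}$ from the halved vertex weights then gives
\[ \partial_{\alpha_i}\partial_{\alpha_j}\,t(H,W_{\vec\alpha})\Big|_{\vec\alpha=0} \;=\; \frac{\hat M_{ij}}{w(u_0u_i)\,w(u_0u_j)\,w(V(G))^{|H|}}, \]
where $\hat M_{ij}$ is the expression defining $M_{ij}$ but with the sum restricted to triples with $v_1\ne v_2$.

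Finally I would compare $\hat M$ with $M$. The case $v_1=v_2$ forces $u_i=u_j$, so $\hat M_{ij}=M_{ij}$ whenever $i\ne j$, while $M_{ii}-\hat M_{ii}$ is a non-negative sum of weighted homomorphism values. The hypothesis $M_{11}M_{22}<M_{12}^2$ therefore implies $\hat M_{11}\hat M_{22}\le M_{11}M_{22}<M_{12}^{\,2}=\hat M_{12}^{\,2}$, so neither $\hat M$ nor the Hessian (which differs from $\hat M$ only by conjugation with the positive diagonal $\mathrm{diag}(1/w(u_0u_1),1/w(u_0u_2))$) is positive semidefinite. Choosing $\vec\alpha$ along a direction of negativity yields $t(H,W_{\vec\alpha})<t(H,W_G)=t(H,W_{\vec\alpha}^{\PP})$ for all sufficiently small perturbations, disproving step Sidorenko. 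The main technical obstacle will be the second-step bookkeeping: carefully tracking the halved vertex weights, the sign sums and the four perturbed edge factors so that one can verify both that every odd-order term cancels and that the surviving quadratic form is precisely encoded by $\hat M$. The slight mismatch between $M$ and $\hat M$ caused by the $v_1=v_2$ diagonal contributions only strengthens the hypothesis in the desired direction, so it is harmless.
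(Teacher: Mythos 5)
Your proposal is correct and follows essentially the same strategy as the paper's proof: perturb edge weights on a $2$-blow-up of $u_0$ in opposite directions on the two clones, observe that the stepping operator averages the perturbation away so $W^{\PP}_{\vec\alpha}=W_G$, and show the second-order term of $t(H,\cdot)$ is governed (up to a positive diagonal rescaling) by $M$. Your explicit handling of the $v_1=v_2$ diagonal via $\hat M$ is in fact slightly more careful than the paper, which asserts $c_2=a^TMa$ even though the coefficient of $\alpha^2$ only receives contributions from pairs of \emph{distinct} edges of $H$; as you note, $\hat M_{ii}\le M_{ii}$, $\hat M_{12}=M_{12}$, and the $\hat M_{ii}$ are non-negative, so the non-PSD conclusion (and hence $c_2<0$) is unaffected and both arguments are sound.
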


\begin{proof}
Let $w$ be the weight function of $G$.
We assume that the sum of the weights of vertices of $G$ is one (if needed, we multiply the weights of all vertices by the same constant).
Consider the step graphon $W_G$ associated with the weighted graph $G$.
Let $J_u$ be the measurable set corresponding to a vertex $u$ of $G$ and set $\PP=\{J_u,u\in V(G)\}$.

Suppose that the matrix $M$ associated with $G$ is not positive semidefinite and fix a vector $a=(a_1,a_2)^T$ such that $a^TMa<0$.
We next define a weighted graph $G_{\alpha}$ with a parameter $\alpha \geq 0$ as follows.
The graph $G_{\alpha}$ is a $2$-blow-up of $G(u_0)$; let $u^+_0$ and $u^-_0$ be the clones of $u_0$.
Each of the clones $u^+_0$ and $u^-_0$ has weight $w(u_0)/2$.
The weight of the edge $u^+_0u_i$ is $w(u_0u_i)(1+\alpha a_i)$ and
the weight of the edge $u^-_0u_i$ is $w(u_0u_i)(1-\alpha a_i)$, $i=1,2$.
The remaining vertices and edges have weights equal to their counterparts in $G$.
Let $W_\alpha$ be the step graphon associated with the weighted graph $G_{\alpha}$ such that
the set corresponding to a vertex $u\not=u_0$ is $J_u$ and
the sets corresponding to the vertices $u^+_0$ and $u^-_0$ are subsets of $J_{u_0}$.
Observe that $W_G=W_\alpha$ for $\alpha=0$ and that $W_G=W_\alpha^\PP$ for any $\alpha$.

Our aim is to show that $t(H,W_\alpha)<t(H,W_G)$ for some $\alpha\in(0,1)$.
To do so, we analyze the density $t(H,W_\alpha)$ as a function of $\alpha$.
Note that $t(H,W_\alpha)$ is actually a polynomial in $\alpha$.
We next wish to determine the coefficients $c_1$ and $c_2$ such that
\begin{equation}
t(H,W_\alpha)=t(H,W_G)+c_1\alpha+c_2\alpha^2+O(\alpha^3)\;\mbox{.}\label{eq-weights-Taylor}
\end{equation}
The coefficient $c_1$ can be determined as follows:
\begin{align*}
c_1 = \sum\limits_{v_0v_1\in E(H)} &
a_1\sumh(H(v_0,v_1),G_0(u^+_0,u_1))-
a_1\sumh(H(v_0,v_1),G_0(u^-_0,u_1))+{}\\ &
a_2\sumh(H(v_0,v_1),G_0(u^+_0,u_2))-
a_2\sumh(H(v_0,v_1),G_0(u^-_0,u_2))
\;\mbox{.}
\end{align*}
Since $\sumh(H(v_0,v_1),G_0(u^+_0,u_i))=\sumh(H(v_0,v_1),G_0(u^-_0,u_i))$ for all edges $v_0v_1\in E(G)$ and all $i\in\{1,2\}$,
we conclude that $c_1=0$.

We next analyze the coefficient $c_2$.
In this case,
we need to count homomorphisms mapping two edges, say $v_0v_1$ and $v'_0v'_1$, of $H$ to edges $u^+_0u_i$ and to $u^-_0u_i$ of $G_0$, $i=1,2$.
If $v_0\not=v'_0$, then the contributions of the homomorphisms mapping the edge $v_0v_1$ to $u^+_0u_i$ and $u^-_0u_i$ have opposite signs and cancel out.
Hence, we obtain the following formula for $c_2$:
\begin{align*}
c_2 = \sum\limits_{v_0v_1,v_0v_2\in E(H)}\;\sum\limits_{i,j=1}^2a_ia_j&\left(\sumh(H(v_0,v_1,v_2),G_0(u^+_0,u_i,u_j))\right.+\\
&\;\left.\sumh(H(v_0,v_1,v_2),G_0(u^-_0,u_i,u_j))\right)\;\mbox{.}
\end{align*}
The definition of the matrix $M$ now yields that
\[
c_2 = \sum_{i,j=1}^2a_ia_j\cdot M_{ij}=a^TMa<0\;\mbox{.}
\]
Since $c_1=0$ and $c_2<0$, we conclude using $W_G=W_\alpha^\PP$ and \eqref{eq-weights-Taylor} that $t(H,W_\alpha)<t(H,W_G)$ for small enough $\alpha>0$.
It follows that the graph $H$ does not have the step Sidorenko property.
\end{proof}

The setting of Theorem~\ref{thm-weights} is sufficient to prove that
the only two-dimensional toroidal grid that is weakly norming is $C_4\cart C_4$ (note that
the toroidal grids $C_{\ell}\cart C_{\ell}$ with $\ell$ odd
are not Sidorenko, and hence also not weakly norming, because they are not bipartite).

We apply Theorem~\ref{thm-weights} with $G=H=C_\ell\cart C_\ell$.
The identity homomorphism contributes to the off-diagonal entry of the matrix from Theorem~\ref{thm-weights}
while the homomorphisms contributing to the diagonal entries have to ``fold'' two edges onto one.
We choose weights in the target grid in such a wat that
the contribution of the former homomorphisms becomes smaller,
which makes the matrix not to be positive semidefinite.

\begin{corollary}
\label{cor-grid}
Let $\ell\ge 6$ be an even integer.
The Cartesian product $C_\ell\cart C_\ell$ does not have the step Sidorenko property.
\end{corollary}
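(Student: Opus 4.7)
The plan is to apply Theorem~\ref{thm-weights} with $H=G=C_\ell\cart C_\ell$, choosing distinguished vertices and a weight function on $G$ so that the associated $2\times 2$ matrix $M$ fails to be positive semidefinite. I would take $u_0=(0,0)$, $u_1=(1,0)$ and $u_2=(0,1)$: two orthogonal neighbors of $u_0$. The coordinate-swap automorphism of $C_\ell\cart C_\ell$ fixes $u_0$ and exchanges $u_1$ with $u_2$, so for any weight function invariant under this swap one automatically has $M_{11}=M_{22}$, and the condition $M_{11}M_{22}<M_{12}^2$ reduces to the single inequality $M_{11}<M_{12}$.

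I would then classify the homomorphisms contributing to $M_{11}$ and $M_{12}$. Every automorphism of $H$ contributes to $M_{12}$ via the triple obtained by pulling back $(u_0,u_1,u_2)$; moreover, because every automorphism of $G$ uses each edge of $G$ exactly once, all automorphisms carry the same weight. In contrast, no automorphism contributes to $M_{11}$, since a bijective map cannot send two distinct neighbors of $v_0$ to the single vertex $u_1$. Hence $M_{11}$ receives contributions only from non-bijective homomorphisms, specifically those containing a ``proper fold'' in which two distinct edges of $H$ sharing a vertex $v_0$ are both collapsed onto the edge $u_0u_1$ of $G$.

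The next step is to choose weights on $G$ that suppress the proper-fold contributions to $M_{11}$ more than the bijective-type contributions to $M_{12}$. A natural one-parameter family is to keep vertex weights uniform and scale the four edges incident to $u_0$ by a common small factor $\beta\in(0,1)$, which preserves the coordinate-swap symmetry. Each proper fold sending two edges of $H$ to $u_0u_1$ picks up an extra factor of $\beta$ beyond any automorphism-type contribution to $M_{12}$, which uses $u_0u_1$ only once. Expanding $M_{11}(\beta)$ and $M_{12}(\beta)$ as polynomials in $\beta$, one aims to show that for some value of $\beta$ one has $M_{12}(\beta)>M_{11}(\beta)$, and Theorem~\ref{thm-weights} then concludes the corollary.

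The main obstacle is the combinatorial bookkeeping: $C_\ell\cart C_\ell$ admits many non-automorphism homomorphisms (folds onto ladders $C_\ell\cart K_2$, onto products $P_{\ell/2+1}\cart C_\ell$, onto smaller tori, etc.), and each must be tallied with its $\beta$-weighted multiplicity before the decisive polynomial comparison can be carried out. The hypothesis $\ell\geq 6$ is essential, since $C_4\cart C_4=Q_4$ is the $4$-dimensional hypercube, which is weakly norming by Hatami's theorem and therefore does have the step Sidorenko property; heuristically, for $\ell\geq 6$ the only $4$-cycles of $C_\ell\cart C_\ell$ are the local ``square'' faces, so proper folds are severely constrained and the suppression strategy succeeds, whereas $Q_4$ contains many additional $4$-cycles that enrich the fold structure enough to keep the matrix $M$ positive semidefinite.
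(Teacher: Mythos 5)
Your high-level plan is the paper's: apply Theorem~\ref{thm-weights} with $H=G=C_\ell\cart C_\ell$, distinguished vertices $u_0=(0,0)$, $u_1=(1,0)$, $u_2=(0,1)$, use the coordinate-swap symmetry to reduce the PSD test to $M_{11}<M_{12}$, and design a weight function under which the identity dominates $M_{12}$ while the folds in $M_{11}$ are suppressed. But the weighting you propose does not deliver the suppression. If the four edges $E_0$ incident to $u_0$ each carry weight $\beta$ and all other weights are $1$, then any contributing homomorphism $f$ (so $f(v_0)=u_0$) carries the factor $\beta^{|f^{-1}(E_0)|}$, and $|f^{-1}(E_0)|\ge 4$ always, with equality exactly when $v_0$ is the unique preimage of $u_0$. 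This equality is achieved not only by automorphisms but by many folds: with $v_1=(1,0)$, $v_2=(-1,0)$, the fold $f(i,j)=(\min(i,\ell-i),j)$ sends $v_1,v_2\mapsto u_1$ while $(0,0)$ remains the unique preimage of $u_0$, so it contributes $\beta^4$ to $M_{11}$, the same order as the identity's $\beta^4$ contribution to $M_{12}$. The ``extra factor of $\beta$'' you invoke is illusory: such a fold hits $u_0u_1$ twice, but then misses one of the other three edges at $u_0$, and the total exponent of $\beta$ is unchanged. After extracting the leading $\beta^4$ coefficients you are left comparing two large counts of constrained homomorphisms with no mechanism to separate them, so the ``combinatorial bookkeeping'' you defer is not a formality---it is the entire problem, and your weight family gives no leverage on it.

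The device that makes the paper's proof work is not an edge weight at $u_0$ but a vertex weight growing geometrically in the distance from $u_0$: $w(v)=\gamma^{\dist(u_0,v)}$ for $v\notin\{u_0,u_1,u_2\}$, with an offset of $\gamma^{-3}$ on each of $u_0,u_1,u_2$, plus a factor $\gamma^{-1/4}$ on four edges $b_1,\ldots,b_4$ near $u_1,u_2$ that are \emph{not} incident to $u_0$. In the limit $\gamma\to\infty$ this forces any surviving homomorphism to (a) preserve every distance from $u_0$, (b) have $\{u_0,u_1,u_2\}$ with exactly one preimage each, and (c) map at most four edges to $\{b_1,\ldots,b_4\}$. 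Your fold example is already killed by (b), since $(0,1)$ also maps to $u_2$. The remaining work is then a finite case analysis showing no homomorphism into $G(u_0,u_1,u_1)$ satisfies (a)--(c) when $\ell\ge 6$; this is tractable precisely because the constraints are so rigid. Without a weighting that enforces a global structural constraint of this kind, the comparison $M_{11}<M_{12}$ does not reduce to anything you can decide.
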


\begin{figure}[t]
\begin{center}
\epsfbox{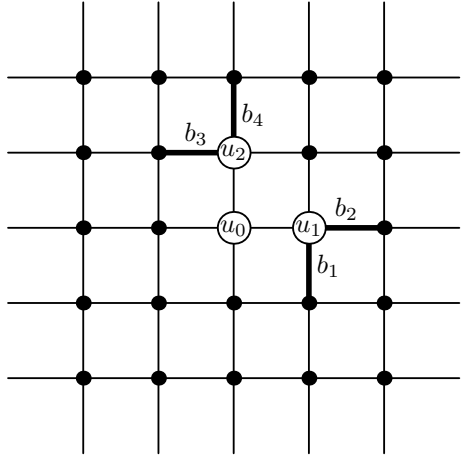} 
\end{center}
\caption{Notation used in the proof of Corollary~\ref{cor-grid}. The edges $b_1$, $b_2$, $b_3$ and $b_4$ are drawn bold.}
\label{fig-grid}
\end{figure}

\begin{proof}	
Fix $\ell\ge 6$ and let $G$ and $H$ be both equal to the graph $C_\ell \; \cart \; C_\ell$;
we denote the vertices of $G$ and $H$ by $(i,j)$, $0\le i,j\le \ell-1$, in such a way that
two vertices are adjacent if they agree in one of the coordinates and differ by one in the other (all computations
with the entries are computed modulo $\ell$ throughout the proof).
Let $u_0$ be the vertex $(0,0)$, $u_1$ the vertex $(1,0)$ and $u_2$ the vertex $(0,1)$.
Further, let $b_1$ be the edge $(1,0)(1,-1)$, $b_2$ the edge $(1,0)(2,0)$,
$b_3$ the edge $(0,1)(-1,1)$ and $b_4$ the edge $(0,1)(0,2)$ (see Figure~\ref{fig-grid}).

We next define the weights of the vertices and the edges of $G$;
to do so, we use a parameter $\gamma\in\NN$, which will be fixed later.
The weight $w(v)$ of a vertex $v$ is $\gamma^{\dist(u_0,v)}$ for $v\not=u_0,u_1,u_2$,
$w(u_0)= \gamma^{-3}$ and $w(u_i)=\gamma^{\dist(u_0,u_i)-3}=\gamma^{-2}$, $i=1,2$.
The weights of all edges of $G$ are equal to one except for the edges $b_1$, $b_2$, $b_3$ and $b_4$ that have weight $\gamma^{-1/4}$.

We wish to apply Theorem~\ref{thm-weights} with the graphs $H$ and $G$, and the distinguished vertices $u_0$, $u_1$ and $u_2$.	
Instead of verifying that the matrix $M$ from the statement of Theorem~\ref{thm-weights} is not positive semidefinite,
we consider the matrix $M$ such that
\[M_{ij}=\sum_{v_1,v_2\in N_H(u_0)}\sumh(H(u_0,v_1,v_2),G(u_0,u_i,u_j))\;\mbox{.}\]
Since $H$ is vertex-transitive,
the considered matrix $M$ is positive semidefinite if and only if
the matrix from the statement of Theorem~\ref{thm-weights} is.
Observe that $M_{1,1}=M_{2,2}$ and $M_{1,2}=M_{2,1}$.

Consider a homomorphism $f$ from $H(u_0,v_1,v_2)$ to $G(u_0,u_i,u_j)$ for some $i,j \in \{1,2\}$.
Observe that the weight of the homomorphism $f$ is equal to
\[\gamma^{\sum\limits_{v \in V(H)}\dist(u_0,f(v))-3\left|f^{-1}(\{u_0,u_1,u_2\})\right|-\frac{1}{4}\left|f^{-1}(\{b_1,b_2,b_3,b_4\})\right|}\;\mbox{.}\]
Note that if $f$ is the identity, then the weight of $f$ is equal to $\gamma^W$ where
\[W=\sum_{v \in V(H)} \dist(u_0,v)-10\;\mbox{.}\]
Since the identity is a homomorphism from $H(u_0,u_i,u_j)$ to $G(u_0,u_i,u_j)$ for $i\not=j$,
it follows that the entries $M_{1,2}$ and $M_{2,1}$ are of order $\Omega(\gamma^W)$, as functions of $\gamma$.
We next show that both $M_{1,1}$ and $M_{2,2}$ are of order $o(\gamma^W)$.
Since $M_{1,1}=M_{2,2}$, it is enough to argue that that $M_{1,1}=o(\gamma^W)$.

We show that every homomorphism $f$ from $H(u_0,v_1,v_2)$ to $G(u_0,u_1,u_1)$ has weight at most $\gamma^{W-\frac{1}{2}}$;
this will imply that $M_{1,1}=o(\gamma^W)$.
Fix a homomorphism $f$ from $H(u_0,v_1,v_2)$ to $G(u_0,u_1,u_1)$ with weight at least $\gamma^W$.
By symmetry, we may assume that $v_1=(1,0)$ and $v_2\in\{(-1,0),(0,1)\}$.
Note that $\left|f^{-1}(\{u_0,u_1,u_2\})\right|\ge 3$.
Since $f$ is a homomorphism,
any shortest path from $u_0$ to $v$ is mapped by $f$ to a walk of at most length $\dist(u_0,v)$ from $f(u_0)=u_0$ to $f(v)$,
it follows that $\dist(u_0,f(v))\leq\dist(u_0,v)$ for every vertex $v$.
Also observe that the parities of $\dist(u_0,f(v))$ and $\dist(u_0,v)$ are the same since the graph $G=H$ is bipartite.
Since the weight of $f$ is at least $\gamma^W$,
the following holds:
$\left|f^{-1}(\{u_0,u_1,u_2\})\right|=3$, $\dist(u_0,f(v))=\dist(u_0,v)$ for every vertex $v$ of $H$ and $\left|f^{-1}(\{b_1,b_2, b_3, b_4\})\right|\le 4$.
Since $\left|f^{-1}(\{u_0,u_1,u_2\})\right|=3$, 
no vertex other than $u_0$, $v_1$ and $v_2$ is mapped by $f$ to any of $u_0$, $u_1$ and $u_2$;
in particular, no vertex is mapped to $u_2$.

To finish the proof, we distinguish two cases based on whether $v_2=(-1,0)$ or $v_2=(0,1)$.
We start with analyzing the case $v_2=(-1,0)$.
Let $i\in\{1,2\}$ and let $v$ be a neighbor of $v_i$ different from $(0,0)$ and $v_i+v_i$.
If $f(v)=(1,1)$ or $f(v)=(2,0)$,
then the common neighbor of $(0,0)$ and $v$ different from $v_i$ must be mapped to $u_1$ or $u_2$, which is impossible.
Hence, $f(v)=(1,-1)$.
Since the choice of $i$ and $v$ was arbitrary,
it follows that all the four edges $(1,0)(1,1)$, $(1,0)(1,-1)$, $(-1,0)(-1,1)$ and $(-1,0)(-1,-1)$ are mapped to the edge $b_1$;
in particular, no other edge is mapped to $b_1$ or $b_2$.
This implies that the vertex $(2,0)$ is mapped by $f$ to $(1,1)$.
It follows that the vertex $(2,1)$, which is a common neighbor of $(1,1)$ and $(2,0)$,
must be mapped to the unique common neighbor $u_1=(1,0)$ of the vertices $f((1,1))=(1,-1)$ and $f((2,0))=(1,1)$,
which is impossible. This finishes the analysis of the case $v_2=(-1,0)$.

It remains to analyze the case that $v_2=(0,1)$.
If the vertex $(1,-1)$ was mapped to $(2,0)$ or $(1,1)$,
then the vertex $(0,-1)$, which is a common neighbor of $(1,-1)$ and $(0,0)$,
would have to be mapped to $(1,0)$ or $(0,1)$, which is impossible.
Hence, the vertex $(1,-1)$ is mapped by $f$ to itself and the vertex $(0,-1)$ is also mapped to itself.
Since swapping coordinates is a symmetry mapping $v_1$ and $v_2$ between each other,
a symmetric argument yields that the vertex $(-1,0)$ is mapped to $(0,-1)$.

Next, if the vertex $(2,0)$ was mapped to the vertex $(1,1)$,
then the vertex $(2,-1)$, which is a common neighbor of $(2,0)$ and $(1,-1)$,
would have to be mapped to $(1,0)$, which is impossible.
It follows that the vertex $(2,0)$ must be mapped to $(2,0)$ or $(1,-1)$.
We conclude that the edge $b_1$ is mapped to itself and the edge $b_2$ to either $b_1$ or $b_2$.
A symmetric argument yields that the edge $b_3$ is mapped to $b_1$ and the edge $b_4$ to $b_1$ or $b_2$.
In particular, no other edges of $G$ are mapped to any of the edges $b_1$, $b_2$, $b_3$ and $b_4$.
This implies that the vertex $(1,1)$ is mapped by $f$ to itself.
Consequently, the vertex $(2,0)$ is also mapped to itself (otherwise, the vertex $(2,1)$ would have to be mapped to $(1,0)$).

We now prove the following statement for $r = 1, \ldots, \ell/2-1$ by induction on $r$:
all the vertices $(r,1)$, $(r,-1)$ and $(r+1,0)$ are mapped by $f$ to themselves.
We have already established this statement for $r=1$, so it remains to present the induction step.
Fix $r=2,\ldots,\ell/2-1$ and assume that all the vertices $(r-1,1)$, $(r-1,-1)$ and $(r,0)$ are mapped to themselves.
The vertex $(r,1)$, which is a common neighbor of $(r-1,1)$ and $(r,0)$,
must be mapped to a common neighbor of $(r-1,1)$ and $(r,0)$ at the distance $r+1$ from $(0,0)$.
However, the only such vertex is $(r,1)$.
A symmetric argument yields that the vertex $(r,-1)$ is mapped to itself.
Since the vertex $(r+1,0)$ must be mapped to a neighbor of $(r,0)$ at distance $r+1$ from $(0,0)$,
it can only be mapped to one of the vertices $(r,1)$, $(r+1,0)$ and $(r,-1)$.
By symmetry, it is enough to exclude that it is mapped to $(r,1)$.
If this was the case, then the vertex $(r+1,-1)$, which is a common neighbor of $(r,-1)$ and $(r+1,0)$,
must be mapped to $(r,0)$, which is impossible.
Hence, the vertex $(r+1,0)$ is mapped to itself, concluding the proof of the statement.

We have just shown that the vertex $(\ell/2,0)=(-\ell/2,0)$ is mapped to itself;
earlier, we have shown that the vertex $(-1,0)$ is mapped to $(0,-1)$.
However, the path $(-1,0)(-2,0)\cdots(-\ell/2,0)$ must be mapped by $f$ to a walk with at most $\ell/2$ vertices
but there is no such walk between the vertices $(0,-1)$ and $(-\ell/2,0)$.
Hence, there is no homomorphism from $H(u_0,v_1,v_2)$ to $G(u_0,u_1,u_1)$ with weight at least $\gamma^W$.
\end{proof}

\section{General Condition}
\label{sec-general}

We now present our general technique for establishing that certain graphs do not have the step Sidorenko property.
One difference is that instead of considering only two neighbors of a distinguished vertex $u_0$,
we can choose any number of neighbors $u_1,\dots,u_k$, giving a larger matrix.
More importantly, we are able to restrict homomorphisms considered in the statement
to only those that
map the neighborhood of each $u_i$ bijectively (to the neighborhood of the image of $u_i$, or a chosen subset of it).

The proof extends the arguments presented in the proof of Theorem~\ref{thm-weights}.
The main new idea is that by blowing up $u_i$,
and appropriately choosing weights on copies of the edges to its neighbors,
we can obtain an expression that is counting homomorphisms to the original graph,
but with a weight that is an arbitrary function of how many neighbors of $u_i$ map to each neighbor of the image of $u_i$.
We choose this function to ensure that exactly one neighbor of $u_i$ (or exactly zero) must map to each neighbor of its image.

\begin{theorem}
\label{thm-general}
Let $H$ be a graph and
let $G$ be a weighted graph with $k+1$ distinguished vertices $u_0,u_1,\ldots,u_k$ such that $u_0u_1,\ldots,u_0u_k$ are edges and
$u_1,\ldots,u_k$ form an independent set.
Further, let $U_i$, $i=1,\ldots,k$, be a subset of neighbors of $u_i$ containing $u_0$, and
let $M$ be the $(k\times k)$-matrix such that
the entry $M_{ij}$ is the sum of the weights of homomorphisms from $H(v_0,v_1,v_2)$ to $G(u_0,u_i,u_j)$,
where the sum runs over all choices of vertices $v_0$, $v_1$ and $v_2$ in $H$,
such that the neighbors of $v_1$ are one-to-one mapped to $U_i$ and the neighbors of $v_2$ to $U_j$.
If the matrix $M$ is not positive semidefinite, then $H$ does not have the step Sidorenko property.
\end{theorem}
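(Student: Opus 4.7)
The plan is to extend the argument in the proof of Theorem~\ref{thm-weights} by blowing up each of $u_1,\dots,u_k$ in addition to $u_0$, with weights on the resulting edges chosen so that summing over the clones restricts the second-order coefficient in $\alpha$ to the specific homomorphisms appearing in the definition of~$M$.

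Concretely, I would blow up $u_0$ into two clones $u_0^+,u_0^-$ and perturb the weights of the edges $u_0^\pm u_i$ by multiplicative factors $(1\pm\alpha a_i)$ exactly as in Theorem~\ref{thm-weights}, where $a\in\RR^k$ is fixed with $a^TMa<0$. In addition, I would replace each $u_i$ (for $i=1,\dots,k$) by $n$ clones $u_i^{(1)},\dots,u_i^{(n)}$ of vertex weight $w(u_i)/n$, and give the edge $u_i^{(j)}x$ (for $x\in N(u_i)$) the weight $w(u_ix)(1+\epsilon B^{(i)}_{j,x})$ for a small $\epsilon>0$ and a real matrix $B^{(i)}$ with zero column sums $\sum_j B^{(i)}_{j,x}=0$. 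The column-sum condition ensures that the stepping operator with the partition $\PP=\{J_u:u\in V(G)\}$ averages all perturbations back to $W_G$, so $W_{\alpha,\epsilon}^\PP=W_G$ irrespective of $\alpha$ and $\epsilon$.

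The core of the proof is the choice of the matrices $B^{(i)}$. Summing over the clone index at $u_i$ inserts, at every preimage $v$ of $u_i$ in a homomorphism $g\colon H\to G$, a multiplicative factor
\[
F^\epsilon_i\bigl((g(w))_{w\in N_H(v)}\bigr)=\frac{1}{n}\sum_{j=1}^n\prod_{w\in N_H(v)}\bigl(1+\epsilon B^{(i)}_{j,g(w)}\bigr),
\]
which expands in $\epsilon$ as $1+\sum_{r\ge 2}\epsilon^r T^{(i)}_r$, where $T^{(i)}_r$ is the $r$-th symmetric moment tensor of the rows of $B^{(i)}$ (the linear-in-$\epsilon$ term vanishing by the column-sum condition). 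By a symmetric-tensor-decomposition argument (allowing signed entries in $B^{(i)}$, which is harmless because $1+\epsilon B^{(i)}_{j,x}\ge 0$ for $\epsilon$ small enough), one can arrange that $T^{(i)}_r$ vanishes for $r\ge 2$ with $r\ne|U_i|$, and that $T^{(i)}_{|U_i|}(y_1,\dots,y_{|U_i|})$ equals a positive multiple of the indicator that $(y_1,\dots,y_{|U_i|})$ is a bijection onto~$U_i$.

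With this setup, all linear-in-$\alpha$ terms in the joint Taylor expansion vanish by the $u_0^\pm$ sign symmetry, and the coefficient of $\alpha^2$ takes the same combinatorial form as in Theorem~\ref{thm-weights}: a double sum over pairs of edges $v_0v_1,v_0v_2\in E(H)$ weighted by $a_ia_j$ and by the total weight of homomorphisms sending $v_0,v_1,v_2$ to $u_0,u_i,u_j$, now multiplied by the $F^\epsilon$-factors at $v_1,v_2$ and at all other preimages of $u_1,\dots,u_k$. For $\alpha$ and $\epsilon$ chosen small with $\alpha$ sufficiently smaller than appropriate powers of $\epsilon$, the term $C\cdot a^TMa\cdot\alpha^2\epsilon^{|U_i|+|U_j|}$ (with $M$ the restricted matrix of the statement and $C>0$ a combinatorial constant) dominates, and since $a^TMa<0$ this gives $t(H,W_{\alpha,\epsilon})<t(H,W_G)=t(H,W_{\alpha,\epsilon}^\PP)$, proving that $H$ does not have the step Sidorenko property.

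The main obstacle I expect is the explicit construction of the matrices $B^{(i)}$ whose first $|U_i|-1$ symmetric moment tensors all vanish while the $|U_i|$-th realizes the bijection indicator, together with the bookkeeping needed to show that among the many cross terms arising from other preimages of $u_1,\dots,u_k$ in the Taylor expansion, only the ones contributing to the restricted matrix $M$ survive at the relevant $\alpha^2\epsilon^{|U_i|+|U_j|}$ order.
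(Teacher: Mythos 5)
Your high-level strategy — two signed clones of $u_0$ to isolate the $\alpha^2$ coefficient, additional blow-ups of $u_1,\dots,u_k$ with weights designed so that summing over clones restricts the surviving homomorphisms — is exactly the paper's approach. But the central technical step you invoke, the ``symmetric-tensor-decomposition argument,'' is impossible as stated, and this is a real gap.

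The obstruction is Gram positivity. With a single clone index $j\in\{1,\dots,n\}$, your second moment tensor
\[
T^{(i)}_2(y_1,y_2)=\tfrac{1}{n}\sum_{j=1}^n B^{(i)}_{j,y_1}B^{(i)}_{j,y_2}
\]
is a Gram matrix, so $T^{(i)}_2(y,y)=\tfrac1n\sum_j\bigl(B^{(i)}_{j,y}\bigr)^2\ge 0$ with equality forcing $B^{(i)}_{j,y}=0$ for every $j$. You require $T^{(i)}_2(y,y)=0$ for each $y\in U_i$: if $|U_i|>2$ because $r=2\ne|U_i|$, and if $|U_i|=2$ because the repeated tuple $(y,y)$ is not a bijection onto $U_i$. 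Either way this forces the $y$-th column of $B^{(i)}$ to vanish for all $y\in U_i$, which annihilates $T^{(i)}_{|U_i|}$ on every tuple with entries in $U_i$, contradicting the requirement that it be a positive multiple of the bijection indicator. (Equivalently, by Cauchy--Schwarz, $T^{(i)}_2(u_0,z)^2\le T^{(i)}_2(u_0,u_0)\,T^{(i)}_2(z,z)=0$, so the off-diagonal entry you need to be positive must be zero.) Allowing signed entries of $B^{(i)}$, as you suggest, does not help here.

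The paper avoids this trap by not using a one-dimensional clone index. It blows $u_i$ up into clones $u_{i,j_1,\dots,j_{|U_i|-1}}$ indexed by a \emph{tuple} $(j_1,\dots,j_{|U_i|-1})\in[n]^{|U_i|-1}$, deletes edges from these clones to vertices outside $U_i\cup\{u_0^+,u_0^-\}$, and sets the clone-to-$z_m$ edge weight to $2^{j_m-1}w(u_iz_m)$, depending on $j_m$ only. This factorises the count across the $z_m$'s, so the needed identity becomes a one-dimensional moment condition $\sum_{j_m}b_{j_m}2^{(j_m-1)r}=[r=1]$, which is always solvable because the relevant matrix is Vandermonde (and hence invertible) — there is no multidimensional tensor positivity constraint to violate. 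The coefficients $b_{j_m}$ are placed in the $\alpha$-perturbation of the $u_0^\pm$–clone edges, so they are activated exactly when the clone is the image of $v_1$ or $v_2$; and the clones carry an extra vertex weight $\varepsilon w(u_i)$, so that in the double limit $\varepsilon,\alpha\to 0$ only the triple $v_0,v_1,v_2$ lands on clones in the leading term, giving $\lim_{\varepsilon\to 0}c_\varepsilon=a^TMa<0$ with $M$ precisely the restricted matrix in the statement. To repair your proposal you would need to adopt both of these modifications: the product (multi-index) clone structure with one weight coordinate per vertex of $U_i\setminus\{u_0\}$, and the edge deletions, which together make the target restriction a product of solvable one-dimensional moment problems rather than an unrealisable symmetric tensor.
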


\begin{proof}
Suppose that the matrix $M$ is not positive semidefinite and fix a vector $a$ such that $a^TMa<0$.
Let $w$ be the weight function of $G$.
As in the proof of Theorem~\ref{thm-weights},
we assume that the sum of the weights of vertices of $G$ is one.
Similarly,
we assume that the weight of each edge is at most $1/2$ (if needed, we can multiply the weights of all edges by the same constant).

We next define a weighted graph $G_{\varepsilon,\alpha}$, which is parameterized by $\varepsilon>0$ and $\alpha\in\RR$.
The structure of the graph is independent of $\varepsilon$ and $\alpha$ and is the following.
Let $n$ be the number of vertices of $H$.
We consider the $3$-blow-up of a vertex $u_0$ and $\left(n^{|U_i|-1}+1\right)$-blow-up of a vertex $u_i$.
The three clones of $u_0$ will be denoted by $u'_0$, $u^+_0$ and $u^-_0$;
one of the $n^{|U_i|-1}+1$ clones of $u_i$ will be denoted by $u'_i$ and
the remaining ones by $u_{i,j_1,\ldots,j_{|U_i|-1}}$ where $1\le j_1,\ldots,j_{|U_i|-1}\le n$.
We next remove every edge going from the vertex $u_{i,j_1,\ldots,j_{|U_i|-1}}$
to a vertex outside the set $U_i$ that is not $u^+_0$ or $u^-_0$,
i.e., the vertex $u_{i,j_1,\ldots,j_{|U_i|-1}}$ is adjacent to $u^+_0$, $u^-_0$ and the vertices of $U_i\setminus\{u_0\}$.

The weight of the vertex $u'_0$ is $(1-2\varepsilon)w(u_0)$ and
the weight of each of the vertices $u^+_0$ and $u^-_0$ is $\varepsilon w(u_0)$.
The weight of the vertex $u'_i$ is $(1-n^{|U_i|-1}\varepsilon)w(u_i)$ and
the weight of each of the vertices $u_{i,j_1,\ldots,j_{|U_i|-1}}$ is $\varepsilon w(u_i)$.
The remaining vertices of $G_{\varepsilon,\alpha}$ have the same weights as in $G$.

Before defining the weights of the edges, we define an auxiliary matrix $B$.
The matrix $B$ has $n$ rows and $n$ columns and $B_{ij}=2^{(i-1)(j-1)}$.
Note that $B$ is a Vandermonde matrix.
Since the matrix $B$ is invertible, there exists a vector $b$ such that $Bb=(0,1,0,\ldots,0)^T$.
The weight of the edge between $u^+_0$ and $u_{i,j_1,\ldots,j_{|U_i|-1}}$
is equal to
\[w(u_0u_i)\left(1+a_i\alpha\prod_{m=1}^{|U_i|-1}b_{j_m}\right)\;\mbox{,}\]
and the weight of the edge between $u^-_0$ and $u_{i,j_1,\ldots,j_{|U_i|-1}}$
is equal to
\[w(u_0u_i)\left(1-a_i\alpha\prod_{m=1}^{|U_i|-1}b_{j_m}\right)\;\mbox{.}\]
The weights of the edges incident with $u'_0$ and
the remaining edges incident with $u^+_0$ and $u^-_0$ are equal to the weights of their counterparts in $G$.
Fix $i\in\{1,\ldots,k\}$ and let $z_1,\ldots,z_{|U_i|-1}$ be the vertices of $U_i$ different from $u_0$.
The weight of the edge between the vertices $u_{i,j_1,\ldots,j_{|U_i|-1}}$ and $z_m$ is equal to $2^{j_m-1}w(u_iz_m)$.
The weights of the edges incident with the vertex $u'_i$ are the same as the weights of their counterparts in $G$.
We have just defined the weights of all edges incident with at least one clone.
The weights of the remaining edges are the same as in~$G$.

We analyze $t(H,W_{\varepsilon,\alpha})$ as a function of $\alpha$ for $\alpha,\varepsilon\in(0,1)$.
In particular, we will show that
\begin{equation}
t(H,G_{\varepsilon,\alpha})=t(H,G_{\varepsilon,0})+c_{\varepsilon}\varepsilon^3\alpha^2+O(\varepsilon^4\alpha^2)\label{eq-general-Taylor}
\end{equation}
for a coefficient $c_{\varepsilon}$, which we will estimate.
Since the coefficient $c_{\varepsilon}$ depends on $\varepsilon$,
it is important to emphasize that the constants hidden in big O notation in \eqref{eq-general-Taylor} are independent of $\varepsilon$ and $\alpha$,
i.e., the equality \eqref{eq-general-Taylor} represents that
there exists $K>0$, which is independent of $\varepsilon$, and a coefficient $c_{\varepsilon}$ for every $\varepsilon\in(0,1)$ such that
the value of $t(H,G_{\varepsilon,\alpha})$ differs from $t(H,G_{\varepsilon,0})+c_{\varepsilon}\varepsilon^3\alpha^2$ by at most $K\varepsilon^4\alpha^2$ for every $\alpha\in(0,1)$.

We now proceed with analyzing the function $t(H,W_{\varepsilon,\alpha})$.
As in the proof of Theorem~\ref{thm-weights},
we observe that $t(H,W_{\varepsilon,\alpha})$ is a polynomial in $\alpha$ and
the linear terms in $\alpha$ cancel out by pairing homomorphisms using $u^+_0$ and those using $u^-_0$.
Hence, only quadratic and higher order terms remain.
To estimate $c_{\varepsilon}$, we need to consider the terms corresponding to homomorphisms mapping exactly three vertices of $H$
to the vertices of $G_{\varepsilon,\alpha}$ with weight $\varepsilon$ and
these vertices must induce a $2$-edge path with the middle vertex mapped to $u^+_0$ or to $u^-_0$ (the contribution of
other homomorphisms cancels out by pairing those using $u^+_0$ and those using $u^-_0$,
similarly as in the proof of Theorem~\ref{thm-weights}).
We arrive at the following identity.

\begin{align*}
c_{\varepsilon}\varepsilon^3 = & \sum_{v_0v_1,v_0v_2\in E(H)}\sum_{i,i'=1}^k\sum_{j\in[n]^{|U_i|-1}}\sum_{j'\in [n]^{|U_{i'}|-1}}
a_ia_{i'}\prod_{m=1}^{|U_i|-1}b_{j_m}\prod_{m=1}^{|U_{i'}|-1}b_{j'_m} \times \\
 & \;\left(\sumh(H(v_0,v_1,v_2),G_{\varepsilon,0}(u^+_0,u_{i,j_1,\ldots,j_{|U_i|-1}},u_{i',j'_1,\ldots,j'_{|U_{i'}|-1}}))\right.+\\
 & \;\;\;\left.\sumh(H(v_0,v_1,v_2),G_{\varepsilon,0}(u^-_0,u_{i,j_1,\ldots,j_{|U_i|-1}},u_{i',j'_1,\ldots,j'_{|U_{i'}|-1}}))\right)
\end{align*}
\pagebreak[3]
It follows that
\begin{align*}
\lim\limits_{\varepsilon\to 0}c_{\varepsilon}=
\sum\limits_{\begin{smallmatrix}v_0v_1\\v_0v_2\end{smallmatrix}\in E(H)}\sum\limits_{i,i'=1}^k\sum\limits_{h}\ 2a_i&a_{i'}w(h)
\sum\limits_{j\in [n]^{|U_i|-1}}\sum\limits_{j'\in[n]^{|U_{i'}|-1}}\\&
\prod\limits_{m=1}^{|U_i|-1}b_{j_m}2^{(j_m-1)h(v_1\hookrightarrow z_m)}\prod\limits_{m=1}^{|U_{i'}|-1}b_{j'_m}2^{(j'_m-1)h(v_2\hookrightarrow z'_m)}
\end{align*}
where the sum is taken over all homomorphisms $h$ from $H$ to $G$ such that $h(v_0)=u_0$, $h(v_1)=u_i$ and $h(v_2)=u_{i'}$, and
$w(h)$ denotes the weight of the homomorphism $h$,
$h(v_1\hookrightarrow z_m)$ denotes the number of neighbors of $v_1$ mapped to $z_m\in U_i$ and
$h(v_2\hookrightarrow z'_m)$ denotes the number of neighbors of $v_2$ mapped to $z'_m\in U_{i'}$.
Observe that $b$ was chosen so that the expression
\[\sum_{j_1,\ldots,j_{|U_i|-1}=1}^n\prod_{m=1}^{|U_i|-1}b_{j_m}2^{(j_m-1)h(v_1\hookrightarrow z_m)}=
  \prod_{m=1}^{|U_i|-1}\sum_{j_m=1}^nb_{j_m}2^{(j_m-1)h(v_1\hookrightarrow z_m)}\]
is one if $h(v_1\hookrightarrow z_m)=1$ and it is zero otherwise.
Hence, it follows that
\[\lim_{\varepsilon\to 0}c_{\varepsilon}=\sum\limits_{v_0v_1,v_0v_2\in E(H)}\sum\limits_{i,i'=1}^k\sum\limits_{h}a_ia_{i'}w(h)\]
where the sum is taken over homomorphisms $h$ from $H$ to $G$ such that $h(v_0)=u_0$, $h(v_1)=u_i$, $h(v_2)=u_{i'}$,
all neighbors of $v_1$ are one-to-one mapped to $U_i$ and all neighbors of $v_2$ are one-to-one mapped to $U_{i'}$.
The definition of the matrix $M$ now implies that
\begin{equation}
\lim_{\varepsilon\to 0}c_{\varepsilon}=\sum\limits_{i,i'=1}^k M_{ii'}a_ia_{i'}=a^T Ma<0\;\mbox{.}
\label{eq-general-lim}
\end{equation}
The expressions \eqref{eq-general-Taylor} and \eqref{eq-general-lim} imply that
there exist $\varepsilon>0$ and $\alpha>0$ such that $t(H,G_{\varepsilon,\alpha})<t(H,G_{\varepsilon,0})$.
Fix such $\varepsilon$ and $\alpha$ for the rest of the proof.

Consider the graphons $W_0$ and $W_{\alpha}$ associated
with the weighted graphs $G_{\varepsilon,0}$ and $G_{\varepsilon,\alpha}$, respectively.
Let $J_u$ be the measurable set corresponding to the vertex $u$ of $G_{\varepsilon,0}$;
we can assume that the measurable set corresponding to the vertex $u$ of $G_{\varepsilon,\alpha}$ is also $J_u$.
Let $\PP$ be the partition of $[0,1]$ formed by $J_{u^+_0}\cup J_{u^-_0}$ and $J_u$, $u\not=u^+_0,u^-_0$.
Observe that $W_0=W^\PP_{\alpha}$.
Since $t(H,W_0)=t(H,G_{\varepsilon,0})$ and $t(H,W_{\alpha})=t(H,G_{\varepsilon,\alpha})$,
we conclude that the graph $H$ does not have the step Sidorenko property.
\end{proof}

Theorem~\ref{thm-general} yields immediately the following corollary,
which in particular rules out many non-biregular graphs to have the step Sidorenko property.
Note that the assumptions of the corollary are easy to verify.

\begin{corollary}
\label{cor-cherry}
Let $H$ be a graph and ${\cal D}_H$ the set of degrees of its vertices.
Further let $M$ be the matrix with rows and columns indexed by the elements of ${\cal D}_H$ such that
the entry $M_{dd'}$ is equal to the number of $2$-edge paths from a vertex of degree $d$ to a vertex of degree $d'$ in $H$.
If the matrix $M$ is not positive semidefinite, then $H$ does not have the step Sidorenko property.
\end{corollary}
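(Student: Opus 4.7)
The plan is to apply Theorem~\ref{thm-general} with a weighted graph $G$ designed so that the matrix it produces equals the cherry matrix of the corollary up to conjugation by a positive diagonal matrix. Enumerate ${\cal D}_H = \{d_1, \ldots, d_k\}$; I would take the underlying structure of $G$ to be a ``degree-star'', namely a central vertex $u_0$ adjacent to one vertex $u_d$ for each $d \in {\cal D}_H$, with each $u_d$ having $d-1$ additional pendant leaves $w_{d,1}, \ldots, w_{d,d-1}$. Then $\deg_G(u_d) = d$, the set $\{u_{d_1}, \ldots, u_{d_k}\}$ is independent, and $u_0 \in N_G(u_d)$ for every $d$, satisfying the hypotheses of Theorem~\ref{thm-general}. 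Setting $U_d := N_G(u_d)$ (of size $d$), the bijection constraint in the theorem forces $\deg_H(v_1) = d$ and $\deg_H(v_2) = d'$ for any triple $(v_0, v_1, v_2)$ contributing to the $(d, d')$-entry.

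The second step is to assign vertex and edge weights on $G$, and possibly adjoin auxiliary ``absorbing'' vertices, so that each surviving homomorphism effectively pairs $(v_1, v_0, v_2)$ as a $2$-edge path in $H$ (identifying $v_0$ with the unique neighbor of $v_1$ mapping to $u_0$, and likewise for $v_2$), and so that the extension of the cherry structure to the rest of $H$ contributes a factor of product form $c_d \cdot c_{d'}$ depending only on the endpoint degrees. The local factors from the bijections on $N_H(v_1)$ and $N_H(v_2)$ (the $(d-1)!$ and $(d'-1)!$ permutations of leaves, combined with vertex and edge weights at the cherry) are already product-separable, so the task reduces to controlling the residual homomorphism of the vertices of $H$ lying outside the cherry neighborhood.

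The main obstacle is precisely this residual control: for a generic $H$ with unknown global structure, one must design $G$ (for instance, by adjoining a pair of dummy vertices $t_1, t_2$ joined by an edge of large weight and each adjacent to all other vertices of $G$, carrying large vertex weights) so that the remaining vertices of $H$ have an essentially unique dominant image, contributing a factor independent of the chosen cherry to every nonzero matrix entry. Once this reduction is in place, the $(d, d')$-entry of the matrix from Theorem~\ref{thm-general} equals $c_d \cdot c_{d'}$ times the cherry count, so that matrix is a positive-diagonal conjugate of the cherry matrix; since positive semidefiniteness is preserved under such conjugation, failure of PSD for the cherry matrix implies failure of PSD for the theorem's matrix, and Theorem~\ref{thm-general} concludes that $H$ does not have the step Sidorenko property.
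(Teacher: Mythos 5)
Your high-level plan is the same as the paper's: apply Theorem~\ref{thm-general} with a weighted graph $G$ whose ``degree profile'' mirrors that of $H$, so that the bijection constraint forces $\deg_H(v_1)=d$, $\deg_H(v_2)=d'$, and then identify the resulting matrix with the cherry matrix up to a positive-diagonal conjugation. The degree-star skeleton you propose for $G$ is a reasonable choice that realizes the bijection constraint in the same way the paper does (the paper uses a complete bipartite graph with $u_i$ of degree $D-k+1$ and declares $U_i\cup\{u_0\}$ of size $d_i$ as the distinguished neighbour set; yours has $u_d$ of degree $d$ with $U_d=N_G(u_d)$).

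The gap is exactly where you flag it, and the specific fix you sketch does not work. If $t_1,t_2$ are universal, high-weight absorbing vertices, then a contributing homomorphism sends every residual vertex of $H$ to $t_1$ or $t_2$, so the residual factor is $N^{|H|-|\{v_0,v_1,v_2\}\cup N(v_1)\cup N(v_2)|}$ (for weight $N$). The exponent is \emph{not} a function of the two endpoint degrees $d,d'$ alone: it depends on how many neighbours $v_1$ and $v_2$ share besides $v_0$, which varies from cherry to cherry within the same $(d,d')$-class. Hence the residual contribution does not separate as $c_d\cdot c_{d'}$, and the diagonal-conjugation step breaks down. Making $N$ larger only magnifies this mismatch. (A secondary issue: making $t_1,t_2$ universal and mutually adjacent renders $G$ non-bipartite, which muddles the analysis of which images are available to the two bipartition classes of $H$.)

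The paper inverts your weight regime and this is what saves the argument. It keeps $G$ bipartite and \emph{complete} bipartite, so adjacency constraints are automatic; it gives the two ``absorbing'' vertices ($u_0$ and a spare $u_{k+1}$) weight $1$, so each residual vertex contributes a factor $1$ and the variable residual count is harmless; and it gives the cherry vertices $u_i$ and $U_i$ weights of size $\varepsilon^{\Theta(1)}$. The dominant contribution to the $(i,j)$-entry is then a $v_1$-side factor (depending only on $d_i$) times a $v_2$-side factor (depending only on $d_j$) times the cherry count, while any extension that sends a residual vertex into the $\varepsilon$-weighted region is $O(\varepsilon^{2+1/d_k})$ and disappears as $\varepsilon\to 0$. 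This is the product-separation you wanted, obtained by shrinking the cherry region rather than inflating the absorber. To repair your proposal you would need to replace ``large weights on $t_1,t_2$'' with ``weight~$1$ on the absorber and $\varepsilon$-weights on $u_d$ and its leaves,'' and you would also want $G$ complete bipartite (or at least to add enough edges) so that residual vertices of $H$ at arbitrary distance can always be absorbed.
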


\begin{proof}
We can assume without loss of generality that $H$ is bipartite;
if not, $H$ does not even have the Sidorenko property.
Let $n=|H|$, let $d_1<\cdots<d_k$ be the degrees of vertices of $H$, i.e., ${\cal D}_H=\{d_1,\ldots,d_k\}$, and let $D=d_1+\cdots+d_k$.
We next construct a weighted bipartite graph $G_{\varepsilon}$ with weights depending on a parameter $\varepsilon>0$.
One part of $G_{\varepsilon}$ has $k+1$ vertices, which are denoted by $u_1,\ldots,u_{k+1}$, and
the other part has $D-k+1$ vertices. One of the vertices of the second part is denoted by $u_0$ and
the remaining $D-k$ vertices are split into disjoint sets $U_1,\ldots,U_k$ such that $|U_i|=d_i-1$, $i=1,\ldots,k$.
The vertices $u_0$ and $u_{k+1}$ have weight one,
each of the vertices $u_i$ has weight $\varepsilon^{\frac{1}{|U_i|}}$ and
each vertex contained in a set $U_i$ has weight $\varepsilon^{\frac{1}{|U_i|}}/(|U_i|-1)!$, $i=1,\ldots,k$.
The weights of all edges of $G_{\varepsilon}$ are equal to one.

We will apply Theorem~\ref{thm-general} with the weighted graph $G_{\varepsilon}$, vertices $u_0,\ldots,u_k$ and sets $U_1\cup\{u_0\},\ldots,U_k\cup\{u_0\}$.
Let $M_{\varepsilon}$ be the matrix from the statement of Theorem~\ref{thm-general} for the graph $G_{\varepsilon}$.
Fix $i,j\in\{1,\ldots,k\}$ and a $2$-edge path $v_1v_0v_2$ such that the degree of $v_1$ is $d_i$ and the degree of $v_2$ is $d_j$.
Let $h$ be a mapping such that $h(v_0)=u_0$, $h(v_1)=u_i$ and $h(v_2)=u_j$.
The mapping $h$ can be extended to $(|U_i|-1)!(|U_j|-1)!$ homomorphisms from $H$ to $G$ such that
\begin{itemize}
\item the neighbors of $v_1$ are one-to-one mapped to $U_i\cup\{u_0\}$,
\item the neighbors of $v_2$ are one-to-one mapped to $U_j\cup\{u_0\}$, and
\item all other vertices of $H$ are mapped to $u_0$ or to $u_{k+1}$.
\end{itemize}
Each such homomorphism has weight $\frac{\varepsilon^2}{(|U_i|-1)!(|U_j|-1)!}$, i.e., their total weight is $\varepsilon^2$.
Any other extensions of $h$ to a homomorphism from $H$ to $G$ such that
the neighbors of $v_1$ are one-to-one mapped to $U_i\cup\{u_0\}$ and the neighbors of $v_2$ to $U_j\cup\{u_0\}$
has weight at most $\varepsilon^{2+1/d_k}$.
We conclude that the entry of the matrix $M_{\varepsilon}$ in the $i$-th row and the $j$-th column
is equal to $M_{ij}\varepsilon^2+O(\varepsilon^{2+1/d_k})$.
It follows that there exists $\varepsilon>0$ such that the matrix $M_{\varepsilon}$ is not positive semidefinite.
Theorem~\ref{thm-general} now yields that $H$ does not have the step Sidorenko property.
\end{proof}

The weights of vertices and edges of the graph $G$ in Theorem~\ref{thm-general} can be set to lower the weight of
specific homomorphisms, as we did in Corollary~\ref{cor-grid}.
We first formalize the ideas used there, so that we can focus on just the existence of very restricted homomorphisms, without counting or weights.

\begin{lemma}
\label{lem-distances}
Let $H$ be a vertex-transitive graph.
Let $u_0$, $u_1$ and $u_2$ be (distinct) distinguished vertices in $H$ such that $u_0u_1$ and $u_0u_2$ are edges.
Suppose that for each distinct neighbors $v_1$ and $v_2$ of $u_0$,
there is no homomorphism $f$ from $H(u_0,v_1,v_2)$ to $H(u_0,u_1,u_1)$ that simultaneously satisfies the following:
\begin{itemize}
\item neighbors of $v_i$ are one-to-one mapped to neighbors of $u_1$ for $i=1,2$,
\item distances from $u_0$ are preserved, i.e., $\dist(v,u_0) = \dist(f(v),u_0)$ for each $v \in V(H)$, and
\item no vertex other than $u_0$, $v_1$ and $v_2$ is mapped to any of $u_0$, $u_1$ and $u_2$.
\end{itemize}
Then $H$ does not have the step Sidorenko property.
\end{lemma}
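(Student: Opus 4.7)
The plan is to apply Theorem~\ref{thm-general} with $G=H$, $k=2$, the vertices $u_0,u_1,u_2$ from the lemma, and $U_1=N_H(u_1)$, $U_2=N_H(u_2)$. I may assume $H$ is bipartite (otherwise it already fails the ordinary Sidorenko property), so $\{u_1,u_2\}$ is independent as Theorem~\ref{thm-general} requires.

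Following the template of Corollary~\ref{cor-grid}, I would assign weights to $G=H$ using a large integer parameter $\gamma$: put $w(v)=\gamma^{\dist(u_0,v)}$ for every $v\notin\{u_0,u_1,u_2\}$, put $w(u_i)=\gamma^{\dist(u_0,u_i)-1}$ for $i\in\{0,1,2\}$, and leave all edge weights equal to~$1$. The weight of any homomorphism $f:H\to H$ is then $\gamma^{S(f)-|f^{-1}(\{u_0,u_1,u_2\})|}$, where $S(f):=\sum_{v\in V(H)}\dist(u_0,f(v))$. Since $f(v_0)=u_0$ forces $\dist(u_0,f(v))\le \dist(u_0,v)$ for every $v$, and since the triples $(v_0,v_1,v_2)$ indexing the matrix $M$ have pairwise distinct entries (the Taylor expansion underlying Theorem~\ref{thm-general} picks up only pairs of distinct edges $v_0v_1,v_0v_2$), we have $S(f)\le\sum_v\dist(u_0,v)$ and $|f^{-1}(\{u_0,u_1,u_2\})|\ge 3$. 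Thus the exponent is at most $W:=\sum_v\dist(u_0,v)-3$, with equality precisely when (i) $f$ preserves distances from $u_0$ and (ii) no vertex outside $\{v_0,v_1,v_2\}$ is mapped into $\{u_0,u_1,u_2\}$.

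The key step is to compare the entries of~$M$. The identity map with triple $(u_0,u_1,u_2)$ contributes $\gamma^W$ to $M_{12}$, so $M_{12}^2=\Omega(\gamma^{2W})$. Now any $f$ contributing to $M_{11}$ with weight at least $\gamma^W$ would satisfy (i), (ii), the one-to-one condition on neighbourhoods already built into the definition of $M$, together with $f(v_0)=u_0$ and $f(v_1)=f(v_2)=u_1$; but this is exactly a homomorphism ruled out by the hypothesis of the lemma. Hence every such $f$ has weight strictly less than $\gamma^W$, and since the exponent is integer-valued each contributing $f$ has weight at most $\gamma^{W-1}$. As the number of triples and of homomorphisms is constant in $\gamma$, this gives $M_{11}=O(\gamma^{W-1})$. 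Coupling this with the trivial bound $M_{22}=O(\gamma^W)$ yields $M_{11}M_{22}=O(\gamma^{2W-1})<M_{12}^2$ for all sufficiently large $\gamma$, so $M$ is not positive semidefinite and Theorem~\ref{thm-general} delivers the desired conclusion.

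The main obstacle is ensuring that a weight of at least $\gamma^W$ really forces (i) and (ii) simultaneously; this is transparent once one notes that every single unit of deficit in $S(f)$ and every single surplus in $|f^{-1}(\{u_0,u_1,u_2\})|$ costs at least one unit in the exponent. The integrality of the exponent then turns the strict inequality coming from the lemma's hypothesis into the $\gamma^{-1}$ quantitative gap needed to beat $M_{12}^2$ against the (constantly many) contributions entering $M_{11}$.
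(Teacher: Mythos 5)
Your proof is correct and follows essentially the same route as the paper: apply Theorem~\ref{thm-general} with $G$ equal to $H$ weighted by $\gamma^{\dist(u_0,\cdot)}$ (shifted by $\gamma^{-1}$ on $u_0,u_1,u_2$), reduce by vertex-transitivity to the matrix with $v_0=u_0$, and compare $M_{11}M_{22}=O(\gamma^{2W-1})$ against $M_{12}^2=\Omega(\gamma^{2W})$ using the lemma's hypothesis to kill the maximal-weight summands in $M_{11}$. One small presentational caveat: the inequality $\dist(u_0,f(v))\le\dist(u_0,v)$ that you use only holds when $v_0=u_0$, so you should make the vertex-transitivity reduction (as in the proof of Corollary~\ref{cor-grid}, and as the paper does) explicit before invoking it; the paper also phrases the bound as $M_{11}=o(\gamma^W)$ rather than your sharper $O(\gamma^{W-1})$, but both suffice.
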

\begin{proof}
We start with constructing a weighted graph $G_{\gamma}$ where the weights depend on a parameter $\gamma\in\NN$.
The graph $G_{\gamma}$ is obtained from $H$
by setting $w(v):=\gamma^{\dist(u_0,v)-1}$ for $v \in \{u_0,u_1,u_2\}$ and
$w(v) := \gamma^{\dist(u_0,v)}$ for each vertex $v\not=u_0,u_1,u_2$.
The weights of all edges of $G_{\gamma}$ are one.
We apply Theorem~\ref{thm-general} to $H$ and $G_\gamma$ with the distinguished vertices $u_0$, $u_1$ and $u_2$.
Since $H$ is vertex-transitive,
we will analyze the matrix $M$ such that
$M_{ij}$ is the sum of weights of homomorphisms from $H(u_0,v_1,v_2)$ to $G_\gamma(u_0,u_i,u_j)$ such that
the neighbors of $v_1$ and $v_2$ are mapped one-to-one to the neighbors of $u_i$ and $u_j$, respectively,
where the sum runs over all choices of $v_1$ and $v_2$ in $H$.
Note that the matrix from the statement Theorem~\ref{thm-general} is the considered matrix $M$ with each entry multiplied by $|G|$,
in particular, it is enough to show that the considered matrix $M$ is not positive semidefinite for some $\gamma$.

Let $W :=\sum_{v \in V(H)} \dist(v,u_0) - 3$.
We show that $M_{1,1} = o(\gamma^W)$, $M_{1,2}=M_{2,1}=\Omega(\gamma^W)$ and $M_{2,2} = O(\gamma^W)$ (as functions of the parameter $\gamma$).
Hence, if $\gamma$ is large enough, the matrix $M$ is not positive semidefinite and
$H$ does not have the step Sidorenko property by Theorem~\ref{thm-general}.

By the definition, the entry $M_{1,2}$ contains
a summand corresponding to the identity homomorphism from $H(u_0,v_1,v_2)$ to $G_\gamma(u_0,u_1,u_2)$;
the weight of this summand is exactly $\gamma^W$. It follows $M_{1,2}=M_{2,1}=\Omega(\gamma^W)$.

Consider a homomorphism $f$ contributing to the sum defining the entry $M_{i,i}$ for $i\in\{1,2\}$.
Observe that $f$ satisfies $\left|f^{-1}(\{u_0,u_1,u_2\})\right|\ge 3$ (at least the three vertices $u_0$, $v_1$ and $v_2$
are mapped to $u_0$ and $u_i$) and $\dist(u_0,f(v))\leq\dist(u_0,v)$ for every vertex $v$ (a shortest walk from $u_0$ to $v$
is mapped by $f$ to a walk of at most the same length from $u_0$ to $f(v)$).
Hence, it holds that $w(f(v))\le w(v)$ for every vertex $v$, and
the equality holds for all vertices $v$ if and only if
$\dist(u_0,f(v))=\dist(u_0,v)$ for every vertex $v$ of $H$ and $\left|f^{-1}(\{u_0,u_1,u_2\})\right|=3$.
In particular, the equality does not hold for any homomorphism $f$ contributing to the sum defining the entry $M_{1,1}$.
It follows that each summand in the sum defining the entry $M_{1,1}$ is of order $O(\gamma^{W-1})$ and
each summand in the sum defining the entry $M_{2,2}$ is of order $O(\gamma^W)$.
Since the number of the summands is independent of $\gamma$,
we conclude that $M_{1,1}=o(\gamma^W)$ and $M_{2,2}=O(\gamma^W)$.
\end{proof}

We conclude the paper with applying Lemma~\ref{lem-distances} to show that
all multidimensional grids other than hypercubes are not weakly norming.

\begin{corollary}
\label{cor-multigrid}
Let $k\geq 2$. 
The Cartesian product $C_{\ell_1}\cart\cdots\cart C_{\ell_k}$
has the step Sidorenko property if and only if the length of each cycle in the product is four,
i.e., $\ell_1=\cdots=\ell_k=4$.
\end{corollary}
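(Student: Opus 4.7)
The plan is to prove the two implications separately. For the ``if'' direction, if $\ell_1 = \cdots = \ell_k = 4$ then, since $C_4 \cong K_2 \cart K_2$ and the Cartesian product is associative, $C_{\ell_1} \cart \cdots \cart C_{\ell_k}$ is the hypercube $Q_{2k}$; this is weakly norming by Hatami~\cite{Hat10} and hence has the step Sidorenko property by Proposition~14.13 of~\cite{Lov12}.

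For the ``only if'' direction, assume that not every $\ell_i$ equals~$4$. If some $\ell_i$ is odd then $H := C_{\ell_1} \cart \cdots \cart C_{\ell_k}$ contains an odd cycle (via projection to the $i$-th coordinate), hence is not bipartite and so fails the Sidorenko property (and the step Sidorenko property). Otherwise all $\ell_i$ are even and, after relabeling, $\ell_1 \geq 6$. Writing $e_j$ for the $j$-th standard unit vector in $V(H) = \ZZ/\ell_1 \times \cdots \times \ZZ/\ell_k$, the plan is to apply Lemma~\ref{lem-distances} with $u_0 = 0$, $u_1 = e_1$ and $u_2 = e_2$; these are pairwise distinct since $k \geq 2$, and $u_0u_1$, $u_0u_2$ are edges.

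The core of the proof is to verify the hypothesis of Lemma~\ref{lem-distances}: for every pair of distinct neighbors $v_1, v_2 \in N(u_0) = \{\pm e_j : 1 \leq j \leq k\}$, no homomorphism $f$ satisfies the three listed conditions. Assuming such an $f$ exists, the third condition combined with $f(u_0) = 0$ and $f(v_1) = f(v_2) = e_1$ forces $f^{-1}(0) = \{u_0\}$, $f^{-1}(e_1) = \{v_1, v_2\}$ and, crucially, $f^{-1}(e_2) = \emptyset$. Using the symmetries of $H$ fixing $\{u_0, u_1, u_2\}$ pointwise (negations and permutations of directions $\geq 3$), the unordered pair $\{v_1, v_2\}$ reduces to a handful of representatives such as $\{e_1, -e_1\}$, $\{e_1, \pm e_2\}$, $\{-e_1, \pm e_2\}$, $\{e_2, -e_2\}$ and $\{e_2, e_3\}$. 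In each case, distance preservation together with the bijections $f|_{N(v_i)} : N(v_i) \to N(e_1)$ and the constraint $f^{-1}(e_2) = \emptyset$ pin down the images of distance-$2$ vertices such as $e_1 + e_2$, $2 e_2$ and $-e_1 \pm e_2$, and one eventually forces some further vertex (typically $f(-e_1)$ or $f(-e_2)$) to take a value in the forbidden set $\{0, e_1, e_2\}$, contradicting the third condition.

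The main obstacle is this case analysis, which is especially delicate when some $\ell_j = 4$ for $j \geq 2$: the $C_4$ factors identify antipodal vertices and create extra common neighbors, so the bijection $f|_{N(v_i)}$ may permute the distance-$2$ neighbors of $u_1$ in several ways, each of which must be ruled out (typically by tracing edge-preservation at a suitable distance-$3$ vertex off the neighborhoods of $v_1$ and $v_2$). Once the hypothesis of Lemma~\ref{lem-distances} is verified in all representative cases, the lemma immediately yields that $H$ does not have the step Sidorenko property.
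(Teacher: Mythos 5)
Your high-level strategy is correct and matches the paper exactly: the ``if'' direction via the hypercube $Q_{2k}$, the odd-cycle case via non-bipartiteness, and the core case via Lemma~\ref{lem-distances} applied with $u_0=0$, $u_1=e_1$, $u_2=e_2$. The observation that condition~(iii) forces $f^{-1}(e_2)=\emptyset$ is also a genuine and useful step in the paper's argument.

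However, the proposal has a real gap: the verification of the hypothesis of Lemma~\ref{lem-distances} --- the entire content of the corollary --- is not carried out. You explicitly identify the ``case analysis'' as the ``main obstacle'' and leave it as a plan. Moreover, the plan as described is unlikely to close: you predict that ``distance preservation\dots pins down the images of distance-$2$ vertices\dots and one eventually forces some further vertex\dots to take a value in the forbidden set.'' But for large $\ell_i$ (all $\ell_i\geq 6$, say), no contradiction arises near $u_0$: the actual argument must chase the map along the whole $e_{i_1}$-direction. The paper establishes a bijection $\pi$ on the neighborhoods, then proves by induction on $r=1,\dots,\ell_{i_1}/2$ that $f(re_{i_1})=re_1$ and $f(re_{i_1}+e)=re_1+\pi(e)$, and only obtains a contradiction from the \emph{wraparound} of the cycle: $\ell_{i_1}/2\cdot e_{i_1}=-\ell_{i_1}/2\cdot e_{i_1}$ forces $f(-e_{i_1})=-e_1$, which then clashes with~(iii). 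This global phenomenon is invisible from distance-$2$ data alone, so the contradiction you anticipate does not arise where you expect it.

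Two further issues. First, your list of orbit representatives is incomplete: the stabilizer you invoke (negations and permutations of directions $\geq 3$) fixes $e_1,e_2$ pointwise but cannot move, say, $\{-e_1,-e_2\}$, $\{e_1,e_3\}$, $\{e_3,-e_3\}$ or $\{e_3,e_4\}$ into your list, and permutations among directions $\geq 3$ are only available when the corresponding $\ell_j$ coincide. The paper avoids this bookkeeping by first proving $f(v_i+v_i)=2e_1$, which reduces all cases to $v_2=-e_{i_1}$ or $v_2=e_{i_2}$ and handles them uniformly. Second, the situation when some $\ell_j=4$ for $j\geq 2$ is not merely ``delicate'': the paper shows via a separate ``close vertices'' argument that $\ell_2=4$ is impossible for a putative $f$, and without this lemma the subsequent induction step (which uses the unique-common-neighbor structure, requiring $\ell_j\geq 6$) breaks down. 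As written, the proposal does not resolve this and would not compile into a proof.
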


\begin{proof}
Let $H=C_{\ell_1}\cart\cdots\cart C_{\ell_k}$.
By symmetry, we can assume that $\ell_1$ is the largest and $\ell_2$ is the smallest among $\ell_1,\ldots,\ell_k$.
If $\ell_1=\cdots=\ell_k=4$, the graph $H$ is isomorphic to the $2k$-dimensional hypercube graph,
which is weakly norming, see~\cite{Hat10} and~\cite[Proposition 14.2]{Lov12};
this implies implies that $H$ has the step Sidorenko property~\cite[Proposition 14.13]{Lov12}.
If $\ell_i$ is odd for some $i$, then the graph $H$ is not bipartite,
which implies that it fails to even have the Sidorenko property.
Hence, we can assume that all $\ell_i$ are even and $\ell_1>4$.

We will view the vertices of $H$ as the elements of $\ZZ_{\ell_1}\times\cdots\times\ZZ_{\ell_k}$ and
perform all computations involving the $i$-th coordinate modulo $\ell_i$.
Let $e_i$ be the $i$-th unit vector.
Note that two vertices of $H$ are adjacent if their difference is equal to $e_i$ or $-e_i$ for some $i=1,\ldots,k$.
Also observe that if $v$ is a vertex of $H$ and $\ell_i>4$, then $v$ is the only common neighbor of $v+e_i$ and $v-e_i$.

We apply Lemma~\ref{lem-distances} with $u_0=(0,\ldots,0)$ and $u_i=e_i$ for $i=1,2$.
Suppose that for some distinct vertices $v_1$ and $v_2$,
there is a homomorphism $f$ from $H(u_0,v_1,v_2)$ to $H(u_0,e_1,e_1)$
contradicting the assumption of Lemma~\ref{lem-distances}, i.e.,
\begin{enumerate}[label=(\roman*)]
\item\label{r1} the neighbors of $v_i$ are one-to-one mapped to neighbors of $e_1$, for $i=1,2$,
\item\label{r2} $\dist(u_0,v) = \dist(u_0,f(v))$ for each $v \in V(H)$, and
\item\label{r3} no vertex other than $u_0$, $v_1$ and $v_2$ is mapped to any of the vertices $u_0$, $e_1$ and $e_2$.
\end{enumerate}
We will show that the existence of such a homomorphism $f$ leads to a contradiction.
By symmetry, we can assume that $v_1=e_{i_1}$ for some $i_1$ and
either $v_2=-e_{i_1}$ or $v_2=e_{i_2}$ for some $i_2\not=i_1$.

Note that the neighbors of $v_1$ are one-to-one mapped to the neighbors of $e_1$, and
let $i'$ be such that $f(e_{i_1}+e_{i'})=e_1+e_1$.
If $i'\not=i_1$, both common neighbors of $u_0$ and $e_{i_1}+e_{i'}$, which are $e_{i_1}$ and $e_{i'}$,
must be mapped to the unique common neighbor of $u_0$ and $e_1+e_1$, which is the vertex $e_1$ (note that $\ell_1>4$).
However, this would contradict~\ref{r3}. Hence, $i'=i_1$, i.e., $f(v_1+v_1)=f(e_{i_1}+e_{i_1})=e_1+e_1$.
It follows that there exists a bijection $\pi$ between $\{\pm e_{i'} \mid i' \neq i_1\}$ and $\{\pm e_{j'} \mid j'\neq 1\}$ such that
$f(e_{i_1} + e) = e_1 + \pi(e)$ for $e \in \{\pm e_{i'} \mid i' \neq i_1\}$.
Observe that a symmetric argument to the one that we have just presented yields that $f(v_2+v_2)=e_1+e_1$.

To exclude the case that $v_2=-e_{i_1}$, let $e=\pi^{-1}(e_2)$, i.e., $f(e_{i_1}+e)=e_1+e_2$.
Note that $e\not=\pm e_{i_1}$.
It follows that the vertex $e$, which is a common neighbor of $u_0$ and $e_{i_1}+e$,
must be mapped to a common neighbor of $u_0$ and $e_1+e_2$, i.e., either to $e_1$ or to $e_2$.
The first case would contradict~\ref{r3}, hence $e$ is mapped to $e_2$, meaning $v_2 = e$. We conclude that $v_2=e_{i_2}$ for some $i_2\not=i_1$ and that $f(e_{i_1}+e_{i_2})=e_1+e_2$.
\medskip

Suppose that $\ell_2=4$ and recall that $f(v_2 + v_2) = e_1 + e_1$.
If additionally $\ell_{i_2}=4$, then $-e_{i_2}$, which is a common neighbor of $u_0$ and $e_{i_2}+e_{i_2}$,
must be mapped to the unique common neighbor of $u_0$ and $e_1+e_1$, i.e., to the vertex $e_1$;
this is impossible by~\ref{r3}. Hence, $\ell_{i_2}\not=4$.

Let us call two vertices $v$ and $v'$ \emph{close} if they have at least two common neighbors.
Observe that two close distinct neighbors $v$ and $v'$ of $e_{i_1}$
must be mapped to close neighbors of $e_1$;
otherwise, all common neighbors of $v$ and $v'$ would be mapped to $e_{i_1}$, contradicting~\ref{r3}.
Since the neighborhood of $e_{i_1}$ is one-to-one mapped to the neighborhood of $e_1$ and
the number of pairs of close neighbors of $e_{i_1}$ is the same as the number of pairs of close neighbors of $e_1$,
it follows that pairs of close neighbors of $e_{i_1}$ are one-to-one mapped to pairs of close neighbors of $e_1$ and
pairs of non-close neighbors of $e_{i_1}$ are one-to-one mapped to pairs of non-close neighbors of $e_1$.
Since $\ell_{i_2}\not=4$, the neighbors $e_{i_1}+e_{i_2}$ and $e_{i_1}-e_{i_2}$ of $e_{i_1}$ are not close.
On the other hand, since $\ell_2=4$,
the vertex $f(e_{i_1}+e_{i_2})=e_1+e_2$ has a common neighbor other than $e_1$ with each neighbor of $e_1$.
In particular, $f(e_{i_1}+e_{i_2})$ and $f(e_{i_1}-e_{i_2})$ are close, which is impossible.
We conclude that $\ell_2\not=4$.
Since $\ell_2$ is the smallest among $\ell_1,\ldots,\ell_k$, it follows that each $\ell_i$ is at least six.
\medskip
	
As the final step of the proof of the corollary, we prove the following statement for $r=1,\dots,\ell_{i_1}/2$
by induction on $r$:
\begin{gather}
	f((r-1) e_{i_1}) = (r-1) e_1, \quad f(r e_{i_1}) = r e_1,\text{ and}\nonumber\\
	f(r e_{i_1} + e) = r e_1 + \pi(e)\ \text{ for } e \in \{\pm e_{i'} \mid i'\neq i_1\}.
\label{eqmultigrid}
\end{gather}
The case $r=1$ follows from the definition of $i_1$ and $\pi$.
We assume that the above statement holds for $r$ and prove it for $r+1 \leq \ell_{i_1}/2$.
We first show that $f((r+1) e_{i_1}) = (r+1) e_1 $.
Note that $f(r e_{i_1} + e_{i_1})$ cannot be $r e_1 -e_1$ by \ref{r2}.
If $f(r e_{i_1} + e_{i_1})$ is $r e_1  + e_j$ for some $j\neq 1$,
then the common neighbor $r e_{i_1} + e_{i_1} + \pi^{-1}(-e_{j})$ of $r e_{i_1} + e_{i_1}$ and $r e_{i_1} + \pi^{-1}(-e_{j})$
must be mapped to the unique common neighbor of $r e_1  + e_j$ and $r e_1  - e_j$, which is $r e_1$, contradicting \ref{r2}.
An analogous argument excludes that $f(r e_{i_1} + e_{i_1})$ is $r e_1  - e_j$ for some $j\neq 1$.
Since the vertex $f((r+1) e_{i_1})$ must be a neighbor of $f(re_{i_1})=re_1$, it follows that $f((r+1) e_{i_1}) = (r+1) e_1 $.

We next analyze $f((r+1) e_{i_1} + e)$ for $e\neq\pm e_{i_1}$.
Since the vertex $(r+1) e_{i_1} + e = r e_{i_1} + e_{i_1} + e$
is a common neighbor of $r e_{i_1} + e_{i_1}$ and $r e_{i_1} +  e$,
it must be mapped to a common neighbor of $r e_1 + e_1$ and $r e_1 + \pi(e)$,
i.e., to $r e_1$ or $r e_1 + e_1 + \pi(e)$.
Since the former is excluded by \ref{r2}, it follows that $f((r+1) e_{i_1} +e)=(r+1)e_1 + \pi(e)$.
This concludes the proof of \eqref{eqmultigrid}.

The statement \eqref{eqmultigrid} implies that $f(\ell_{i_1}/2 \cdot e_{i_1}) = \ell_{i_1}/2 \cdot e_1$,
in particular $\ell_{i_1} \geq \ell_1$ by \ref{r2}.
Since the path $u_0, -e_{i_1},-2e_{i_1}, \ldots,-\ell_{i_1}/2 \cdot e_{i_1}$
must be mapped to a path from $u_0$ to $f(-\ell_{i_1}/2 \cdot e_{i_1})=f(\ell_{i_1}/2 \cdot e_{i_1})=\ell_{i_1}/2 \cdot e_1$ and
the vertices of the path must be mapped to vertices at distances $0,1,\ldots,\ell_{i_1}/2$ from $u_0$ by \ref{r2},
the path can be mapped only to the path $u_0,e_1,2e_1,\ldots, \ell_{i_1}/2 \cdot e_{1}$ or,
if $\ell_1=\ell_{i_1}$, to the path $u_0,-e_1,-2e_1,\ldots,-\ell_{i_1}/2 \cdot e_{1}$ 
The former case is impossible since $-e_{i_1}$ cannot be mapped to $e_1$ by \ref{r3}.
It follows that $\ell_1=\ell_{i_1}$ and $f(-e_{i_1})=-e_1$.
Hence, the vertex $e_{i_2}-e_{i_1}\not=u_0$, which is a common neighbor of $e_{i_2}$ and $-e_{i_1}$,
must be mapped to the unique common neighbor of $f(e_{i_2}) = e_1$ and $f(-e_{i_1})=-e_1$,
which is $u_0$. However, this contradicts~\ref{r3}.
We conclude there is no homomorphism $f$ satisfying \ref{r1}--\ref{r3}.
Lemma~\ref{lem-distances} now implies that $H$ does not have the step Sidorenko property.
\end{proof}

\section{Conclusion}

Corollary~\ref{cor-grid} and Corollary~\ref{cor-multigrid} give an infinite class of edge-transitive graphs 
that are not weakly norming, which answers in the negative a question of Hatami~\cite{Hat10}.
Conlon and Lee~\cite[Conjecture 6.3]{ConL17} present a large class of weakly norming graphs,
which they call reflection graphs, and
conjecture that a bipartite graph is weakly norming if and only if it is edge-transitive under a subgroup of its automorphism group (generated by so called `cut involutions').
In particular, this would imply that all weakly norming graphs are edge-transitive.

Since every weakly norming graph has the step Sidorenko property,
it is natural to ask whether the converse is true for connected graphs, i.e.,
whether every connected graph with the step Sidorenko property is weakly norming.
This question has been very recently answered in the affirmative by Dole\v zal et al.~\cite{DolGHRR18} who showed the following:
a connected graph $G$ is weakly norming if and only if it has the step Sidorenko property.

Finally, it is natural to wonder about the Forcing Conjecture in the setting of the step Sidorenko property.
Let us say that a graph $H$ has the {\em step forcing property} if and only if
\[t(H,W^{\PP})\le t(H,W)\]
for every graphon $W$ and every partition $\PP$ of $[0,1]$ into finitely many non-null measurable sets and
the equality holds if and only if $W^{\PP}$ and $W$ are equal almost everywhere.
It can be shown that all even cycles have the step forcing property (while an ad hoc argument can be given,
this also follows from~\cite[Theorem 3.14]{DolGHRR18}).
Graphs with the step forcing property
are related to the proof of the existence of graphons via weak$^{*}$ limits given by Dole\v zal and Hladk\'y~\cite{DolH17};
in particular, if $H$ has the step forcing property,
minimizing the entropy of $W$ in the arguments given in~\cite{DolH17} can be replaced by maximizing $t(H,W)$.

\section*{Acknowledgements}

The first author would like to thank Karel Kr\'al and L\'aszl\'o Mikl\'os Lov\'asz
for discussions on the step Sidorenko property and the step forcing property.

\bibliographystyle{bibstyle}
\bibliography{stepsidorenko}

\end{document}